\DeclareMathAlphabet{\lscr}{T1}{calligra}{m}{n}
\DeclareFontShape{T1}{calligra}{m}{n}{<->s*[1.1]callig15}{}
\DeclareFontFamily{OMS}{rsfs}{\skewchar\font'60}
\DeclareFontShape{OMS}{rsfs}{m}{n}{<-5>rsfs5 <5-7>rsfs7 <7->rsfs10 }{}
\DeclareSymbolFont{rsfs}{OMS}{rsfs}{m}{n}
\DeclareSymbolFontAlphabet{\scr}{rsfs}
\newcommand{\sA}{\scr{A}}
\newcommand{\sB}{\scr{B}}
\newcommand{\sC}{\scr{C}}
\newcommand{\sH}{\scr{H}}
\newcommand{\sI}{\scr{I}}
\newcommand{\sM}{\scr{M}}
\newcommand{\sN}{\scr{N}}
\newcommand{\sO}{\scr{O}}
\newcommand{\sP}{\scr{P}}
\newcommand{\sU}{\scr{U}}
\newcommand{\bC}{\mathbb{C}}
\newcommand{\bF}{\mathbb{F}}
\newcommand{\bR}{\mathbb{R}}
\newcommand{\bT}{\mathbb{T}}
\newcommand{\bZ}{\mathbb{Z}}
\def\quickop#1{\expandafter\newcommand\csname #1\endcsname{\operatorname{#1}}}
\DeclareMathAlphabet{\mathpzc}{OT1}{pzc}{m}{it}
\DeclareMathAlphabet{\mathfrc}{T1}{frc}{m}{n}
\DeclareMathAlphabet{\mathitbf}{OML}{cmm}{b}{it}
\newcommand{\boxp}{\mathbin{\square}}
\newcommand{\oboxp}{\mathbin{\overline{\square}}}
\newcommand{\ground}{\mathbbm{k}}
\newcommand{\rmod}{\ground\textrm{-mod}}
\newcommand{\spaces}{\textrm{Top}}
\newcommand{\gtop}{\G\textrm{-Top}}
\newcommand{\op}{^{\text{op}}}
\newcommand{\fund}[1]{\Pi_\G{#1}}
\newcommand{\fundx}[1]{\Pi}
\newcommand{\fundhom}[2]{\fundx{}\!\left( {#1},{#2} \right)}
\newcommand{\chr}{\text{Ch}_\ground}
\newcommand{\fmap}[1]{(\alpha_{#1},[\gamma_{#1}])}
\newcommand{\fq}{\bF_q}
\newcommand{\cyctwo}{\bZ/2}
\newcommand{\cpv}[1]{\bC P({#1})}
\newcommand{\ab}{\textbf{Ab}}
\newcommand{\myspan}[3]{\xymatrix@R=0.5pc@C=1pc{ & {\gset{#2}} \ar[dr] \ar[dl] & \\ {\gset{#1}} & & {\gset{#3}}\\}}
\newcommand{\inlinespan}[3]{{\gset{#1}}\longleftarrow{\gset{#2}}\longrightarrow{\gset{#3}}}
\newcommand{\contraspan}[3]{\xymatrix@R=0.5pc@C=1pc{ & {#1} \ar@{=}[dr] \ar[dl]_{#3} & \\ {#2} & & {#1}\\}}
\newcommand{\cospan}[3]{\xymatrix@R=0.5pc@C=1pc{ & {#1} \ar@{=}[dl] \ar[dr]^{#3} & \\ {#1} & & {#2}\\}}
\newcommand{\idspan}[1]{\xymatrix@R=0.5pc@C=1pc{ & {#1} \ar@{=}[dl] \ar@{=}[dr] & \\ {#1} & & {#1}\\}}
\newcommand{\minicontraspan}[3]{\xymatrix@R=0.2pc@C=.4pc{ & {\scriptstyle #1} \ar@{=}[dr] \ar[dl]_{\scriptstyle #3} & \\ {\scriptstyle #2} & & {\scriptstyle #1}\\}}
\newcommand{\minicospan}[3]{\xymatrix@R=0.2pc@C=.4pc{ & {\scriptstyle #1} \ar@{=}[dl] \ar[dr]^{\scriptstyle #3} & \\ {\scriptstyle #1} & & {\scriptstyle #2}\\}}
\newcommand{\miniidspan}[1]{\xymatrix@R=0.2pc@C=.4pc{ & {\scriptstyle #1} \ar@{=}[dl] \ar@{=}[dr] & \\ {\scriptstyle #1} & & {\scriptstyle #1}\\}}
\newcommand{\mf}[5]{\xymatrix{ {#1} \ar@/_2ex/[d]_{#4} \\ {#2} \ar@(d,dr)[]_{#5} \ar@/_2ex/[u]_{#3} \\ }}
\newcommand{\mymatrix}[1]{\left( \begin{smallmatrix}#1 \end{smallmatrix} \right)}
\newcommand{\pullbackcorner}[1][dr]{\save*!/#1-1.2pc/#1:(-1,1)@^{|-}\restore}
\newcommand{\trivo}{\bullet}
\newcommand{\G}{G}
\newcommand{\eqchains}[1]{C_*^\G({#1})}
\newcommand{\sh}{h_\G} 
\newcommand{\ulh}{\underline{H}_\G} 
\newcommand{\redh}{\widetilde{H}_\G} 
\newcommand{\h}{{H}_\G} 
\newcommand{\coh}{\mathcal{H}}
\newcommand{\cohomplot}[9]{%
  \xymatrix@R=0.5pc@C=#9{
  &  &  &  &  &  & \makebox[0pt][c]{${#2}$} &  &  &  &  &  \\
{\phantom{\makebox[0pt][c]{${#8}$}}} &   & \makebox[0pt][c]{${#7}$} &   & \makebox[0pt][c]{${#7}$} &   & \makebox[0pt][c]{${#6}$} &   & . &   & . &   \\
   & . &   & . &   & . &   & . &   & . &   & . \\
{\phantom{\makebox[0pt][c]{${#8}$}}} &   & \makebox[0pt][c]{${#7}$} &   & \makebox[0pt][c]{${#7}$} &   & \makebox[0pt][c]{${#6}$} &   & . &   & . &   \\
   & . &   & . &   & . &   & . &   & . &   & . \\
    \ar@{<.>}[rrrrrrrrrrrr] &    & \makebox[0pt][c]{$#4$} &   & \makebox[0pt][c]{$#4$} &   & \makebox[0pt][c]{${#3}$} &   & \makebox[0pt][c]{$#5$} &   & \makebox[0pt][c]{$#5$} &   &  & \makebox[0pt][c]{${#1}$} \\
{\phantom{\makebox[0pt][c]{${#8}$}}} & . &   & . &   & . &   & . &   & \makebox[0pt][c]{${#7}$} &   & \makebox[0pt][c]{${#7}$} \\
{\phantom{\makebox[0pt][c]{${#8}$}}} &   & . &   & . &   & \makebox[0pt][c]{${#6}$} &   & . &   & . &   \\
{\phantom{\makebox[0pt][c]{${#8}$}}} & . &   & . &   & . &   & . &   & \makebox[0pt][c]{${#7}$} &   & \makebox[0pt][c]{${#7}$} \\
{\phantom{\makebox[0pt][c]{${#8}$}}} &   & . &   & . &   & \makebox[0pt][c]{${#6}$} &   & . &   & . &   \\
    &   &   &   &   &   & \ar@{<.>}[uuuuuuuuuu]  &   &   &   &   & \\
  } %
}
\newcommand{\shiftedpic}[8]{%
  \xymatrix@R=0.5pc@C=0.7pc{
  &  &  &  &  &  & \makebox[0pt][c]{${#2}$} &  &  &  &  &  \\
{\phantom{\makebox[0pt][c]{${#8}$}}} &   & \makebox[0pt][c]{${#7}$} &   & \makebox[0pt][c]{${#7}$} &   & \makebox[0pt][c]{${#6}$} &   & . &   & . &   \\
   & . &   & . &   & . &   & . &   & . &   & . \\
 &    & \makebox[0pt][c]{$#4$} &   & \makebox[0pt][c]{$#4$} &   & \makebox[0pt][c]{${#3}$} &   & \makebox[0pt][c]{$#5$} &   & \makebox[0pt][c]{$#5$} &   &  & \\
{\phantom{\makebox[0pt][c]{${#8}$}}} & . &   & . &   & . &   & . &   & \makebox[0pt][c]{${#7}$} &   & \makebox[0pt][c]{${#7}$} \\
{\phantom{\makebox[0pt][c]{${#8}$}}} &   & . &   & . &   & \makebox[0pt][c]{${#6}$} &   & . &   & . &   \\
{\phantom{\makebox[0pt][c]{${#8}$}}} & . &   & . &   & . &   & . &   & \makebox[0pt][c]{${#7}$} &   & \makebox[0pt][c]{${#7}$} \\
   \ar@{<.>}[rrrrrrrrrrrr] {\phantom{\makebox[0pt][c]{${#8}$}}} &   & . &   & . &   & \makebox[0pt][c]{${#6}$} &   & . &   & . &  & & \makebox[0pt][c]{${#1}$} \\
{\phantom{\makebox[0pt][c]{${#8}$}}} & . &   & . &   & . &   & . &   & \makebox[0pt][c]{${#7}$} &   & \makebox[0pt][c]{${#7}$} \\
{\phantom{\makebox[0pt][c]{${#8}$}}} &   & . &   & . &   & \makebox[0pt][c]{${#6}$} &   & . &   & . &   \\
    &   &   &   &   &   & \ar@{<.>}[uuuuuuuuuu]  &   &   &   &   & \\
  } %
}
\newcommand{\kernelpic}[8]{%
  \xymatrix@R=0.5pc@C=0.7pc{
  &  &  &  &  &  & \makebox[0pt][c]{${#2}$} &  &  &  &  &  \\
{\phantom{\makebox[0pt][c]{${#8}$}}} &   & . &   & . &   & . &   & . &   & . &   \\
   & . &   & . &   & . &   & . &   & . &   & . \\
 &    & \makebox[0pt][c]{$#4$} &   & \makebox[0pt][c]{$#4$} &   & \makebox[0pt][c]{${#3}$} &   & \makebox[0pt][c]{$#5$} &   & \makebox[0pt][c]{$#5$} &   &  & \\
{\phantom{\makebox[0pt][c]{${#8}$}}} & . &   & . &   & . &   & . &   & \makebox[0pt][c]{${#7}$} &   & \makebox[0pt][c]{${#7}$} \\
{\phantom{\makebox[0pt][c]{${#8}$}}} &   & . &   & . &   & . &   & . &   & . &   \\
{\phantom{\makebox[0pt][c]{${#8}$}}} & . &   & . &   & . &   & . &   & \makebox[0pt][c]{${#7}$} &   & \makebox[0pt][c]{${#7}$} \\
   \ar@{<.>}[rrrrrrrrrrrr] {\phantom{\makebox[0pt][c]{${#8}$}}} &   & . &   & . &   & . &   & . &   & . &  & & \makebox[0pt][c]{${#1}$} \\
{\phantom{\makebox[0pt][c]{${#8}$}}} & . &   & . &   & . &   & . &   & . &   & . \\
{\phantom{\makebox[0pt][c]{${#8}$}}} &   & . &   & . &   & . &   & . &   & . &   \\
    &   &   &   &   &   & \ar@{<.>}[uuuuuuuuuu]  &   &   &   &   & \\
  } %
}
\newcommand{\covproj}[1]{P_{\!#1}} 
\newcommand{\contraproj}[1]{P^{\!#1}} 
\newcommand{\covinj}[1]{I_{\!#1}} 
\newcommand{\contrainj}[1]{I^{\!#1}} 
\newcommand{\gset}[1]{\mathbf{#1}}
\newcommand{\defword}[1]{\textbf{#1}}
\newcommand{\cxuniverse}{{\sU_{\bC}}}
\newcommand{\runiverse}{\sU_{\bR}}
\newcommand{\triv}{\{e\}}
\DeclareMathOperator{\proj}{proj}
\renewcommand{\to}{\longrightarrow}
\newtheorem{thm}{Theorem}[section]
\newaliascnt{cor}{thm}
\newtheorem{cor}[cor]{Corollary}
\newaliascnt{prop}{thm}
\newtheorem{prop}[prop]{Proposition}
\newaliascnt{lem}{thm}
\newtheorem{lem}[lem]{Lemma}
\newaliascnt{conj}{thm}
\newaliascnt{fact}{thm}
\newaliascnt{axiom}{thm}
\newaliascnt{axioms}{thm}
\theoremstyle{definition}
\newaliascnt{defn}{thm}
\newtheorem{defn}[defn]{Definition}
\newaliascnt{exmp}{thm}
\newtheorem{exmp}[exmp]{Example} 
\theoremstyle{remark}
\newaliascnt{rem}{thm}
\newtheorem{rem}[rem]{Remark}
\let\c@equation\c@thm
\numberwithin{equation}{section}
\title{Equivariant Spectral Sequences for Local Coefficients}
\author{Megan Guichard Shulman}
\date{\today}
\begin{document}

\begin{abstract}
We recall how a description of local coefficients that Eilenberg introduced
in the 1940s leads to spectral sequences for the computation of homology
and cohomology with local coefficients.  We then show how to construct new
equivariant analogues of these spectral sequences and give a worked example
of how to apply them in a computation involving the equivariant Serre
spectral sequence.

This paper contains some of the material in the author's Ph.D. thesis,
which also discusses the results of L.\ Gaunce Lewis \cite{lgl} on the
cohomology of complex projective spaces and corrects some flaws in that
paper.
\end{abstract}

\maketitle

\tableofcontents

Applications of the Serre spectral sequence in the literature usually use
trivial local coefficients. This perhaps reflects the tendency of much of
modern algebraic topology to steer away from the unstable world, where the
fundamental group cannot be ignored.  But it may also reflect our lack of
tools for computing the relevant homology and cohomology with local
coefficients.  Whatever the reason, it is impossible to ignore local
coefficients when working equivariantly with Bredon (co)homology and
the equivariant Serre spectral sequence; almost no interesting examples
reduce to trivial local coefficients.  It is thus necessary to develop some
tools for working with homology and cohomology with local coefficients.

To this end, we recall a simple universal coefficient spectral sequence
which appears in \cite[p.~355]{CE}, but which, to the best of our
knowledge, has not previously been applied in conjunction with the Serre
spectral sequence.  Part of the point is that the definition of local
coefficients that appears in the construction of the Serre spectral
sequence is not tautologically the same as the definition that gives the
cited universal coefficient spectral sequence. 

The connection comes from an old result of Eilenberg \cite{Eil},
popularized by Whitehead \cite[VI.3.4]{GW} and, more recently, by Hatcher
\cite[App3.H]{Hat}.  It identifies the local coefficients that appear in
the context of fibrations with a more elementary definition in terms of the
chains of the universal cover of the base space.  The identification makes
working with local coefficients much more feasible.  We shall first say
in \autoref{sec::noneq} how this goes nonequivariantly, and illustrate with
an example.  We then explain the equivariant generalization in
\autoref{sec::eq}. The later parts of the paper will develop some necessary
background and finally go through a sample calculation in
\autoref{sec::computations}.  

\section{The nonequivariant situation}\label{sec::noneq}

Let $X$ be a path-connected based space with universal cover $\tilde X$.
Let $\pi = \pi_1(X)$ and let $\pi$ act on the right of $\tilde{X}$ by deck
transformations.  Fix a ring $R$, and let $M$ be a left and $N$ be a right
module over the group ring $R[\pi]$.  Let $C_*$ be the normalized singular
chain complex functor with coefficients in $R$.

\begin{defn}\label{def::stdloc}  Define the homology of $X$ with coefficients in
$M$ by 
\[  H_*(X;M) = H_*(C_*(\tilde X)\otimes_{R[\pi]} M). \]
Define the cohomology of $X$ with coefficients in $N$ by \ \ 
\[ H^*(X;N) = H^*(\Hom_{R[\pi]}(C_*(\tilde X),N)). \]
\end{defn}

Functoriality in $M$ and $N$ for fixed $X$ is clear.  For a based map
$f\colon X\to Y$, where $\pi_1(Y) = \rho$, and for a left $R[\rho]$-module
$P$, we may regard $P$ as a $R[\pi]$-module by pullback along $\pi_1(f)$,
and then, using the standard functorial construction of the universal
cover, we obtain
\[ f_*\colon H_*(X;f^*P)\to H_*(Y;P). \]
Cohomological functoriality is similar.  The definition goes back to
Eilenberg \cite{Eil}, and has the homology of spaces and the homology of
groups as special cases, as discussed below.  It deserves more emphasis
than it is usually given because it implies spectral sequences for the
calculation of homology and cohomology with local coefficients, as we shall
recall. 

\begin{exmp}  If $\pi$ acts trivially on $M$ and $N$, then $H_*(X;M)$ and
$H^*(X;N)$ are the usual homology and cohomology groups of $X$ with
coefficients in $M$ and $N$.  We can identify $C_*(X)$ with
$C_*(\tilde{X})\otimes_{R[\pi]}R$, where $R[\pi]$ acts trivially on $R$.
This implies the identifications 
\[C_*(\tilde X)\otimes_{R[\pi]} M\cong C_*(X;M)\hspace{12pt}
\text{and}\hspace{12pt} \Hom_{R[\pi]}(C_*(\tilde X),N) \cong C^*(X;N). \]
\end{exmp}

\begin{exmp}  If $X = K(\pi,1)$, then $H_*(X;M)$ and $H^*(X;N)$ are the
usual homology and cohomology groups of $\pi$ with coefficients
in $M$ and $N$ since $C_*(\tilde{X})$ is an $R[\pi]$-free resolution of $R$.
That is,
\[  H_*(K(\pi,1);M) = \Tor^{R[\pi]}_*(R,M) \hspace{12pt} 
\text{and}\hspace{12pt} H^*(K(\pi,1);M) = \Ext_{R[\pi]}^*(R,N). \]
\end{exmp}

\begin{exmp}  If $M =R[\pi]\otimes_R A$ and $N = \Hom_R(R[\pi],A)$ for an
$R$-module $A$, then \[H_*(X;M)\cong H_*(\tilde{X},A)\hspace{12pt}
\text{and}\hspace{12pt} H^*(X;N) \cong H^*(\tilde{X};A). \]
\end{exmp}

\begin{rem}  If we replace $N$ by $M$ 
(viewed as a right $R[\pi]$-module) in the cohomology 
case of the previous example, then we are forced to 
impose finiteness restrictions and consider cohomology
with compact supports; compare \cite[3H.5]{Hat}.
\end{rem}

We have spectral sequences that generalize the last two examples.  When
$\pi$ acts trivially on $M$ and $N$, they can be thought of as versions of
the Serre spectral sequence of the evident fibration $\tilde{X}\to
X\to K(\pi,1)$. 

\begin{thm}[Eilenberg Spectral Sequence]\label{thm::CE}  There are spectral sequences
\[  E^2_{p,q} = \Tor^{R[\pi]}_{p,q}(H_*(\tilde{X}),M) \Longrightarrow  H_{p+q}(X;M)  \]
and
\[  E_2^{p,q} = \Ext_{R[\pi]}^{p,q}(H_*(\tilde{X}),N) \Longrightarrow  H^{p+q}(X;N).  \]
\end{thm}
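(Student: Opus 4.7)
Both spectral sequences will be instances of the standard Cartan--Eilenberg hyperhomology (respectively hypercohomology) spectral sequence, applied to the chain complex $C_*(\tilde X)$ of $R[\pi]$-modules together with $M$ (respectively $N$). The structural input that makes everything work is that each $C_n(\tilde X)$ is a \emph{free} right $R[\pi]$-module on a set of representatives for the $\pi$-orbits of singular $n$-simplices of $\tilde X$; this uses that $\pi$ acts freely on $\tilde X$ by deck transformations.

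For the homology statement the plan is to pick a projective resolution $P_\bullet\twoheadrightarrow M$ over $R[\pi]$ and form the first-quadrant double complex $D_{p,q}=C_q(\tilde X)\otimes_{R[\pi]}P_p$. One then runs the two standard spectral sequences of the total complex of $D$. Filtering by rows exploits the flatness of each $C_q(\tilde X)$: the row $C_q(\tilde X)\otimes_{R[\pi]}P_\bullet$ has homology concentrated in degree $0$ at $C_q(\tilde X)\otimes_{R[\pi]}M$, so this spectral sequence collapses to $H_{p+q}(X;M)$. Filtering by columns exploits the flatness of each $P_p$: the column homology is $H_q(\tilde X)\otimes_{R[\pi]}P_p$, and the residual horizontal differentials are exactly those of the complex computing $\Tor$, yielding $E^2_{p,q}=\Tor^{R[\pi]}_p(H_q(\tilde X),M)$ as required.

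For the cohomology statement I would dually take an injective resolution $N\hookrightarrow I^\bullet$ and form the double complex $D^{p,q}=\Hom_{R[\pi]}(C_p(\tilde X),I^q)$. One filtration degenerates to $H^{p+q}(X;N)$ using the projectivity of $C_p(\tilde X)$, which makes $\Hom_{R[\pi]}(C_p(\tilde X),-)$ exact on the resolution $N\hookrightarrow I^\bullet$. The other filtration uses the injectivity of each $I^q$, so that $\Hom_{R[\pi]}(-,I^q)$ is exact. A short diagram chase with the short exact sequences
\[ 0\to Z_p\to C_p(\tilde X)\to B_{p-1}\to 0, \qquad 0\to B_p\to Z_p\to H_p(\tilde X)\to 0 \]
of cycles, boundaries and homology of $C_*(\tilde X)$ then identifies $H^p(\Hom_{R[\pi]}(C_\bullet(\tilde X),I^q))$ with $\Hom_{R[\pi]}(H_p(\tilde X),I^q)$, the extension from a map $B_{p-1}\to I^q$ to a map $C_p(\tilde X)\to I^q$ being supplied by the injectivity of $I^q$. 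Taking vertical cohomology then yields $\Ext_{R[\pi]}^{*}(H_*(\tilde X),N)$, converging to $H^{p+q}(X;N)$.

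The step I expect to require the most care is precisely this last identification of $H^p(\Hom_{R[\pi]}(C_\bullet(\tilde X),I^q))$ with $\Hom_{R[\pi]}(H_p(\tilde X),I^q)$, together with the bookkeeping of bigrading conventions so that the $E_2$-term genuinely matches the $\Ext^{p,q}_{R[\pi]}(H_*(\tilde X),N)$ of the statement and not a transposition of it. Beyond that, the argument is a direct appeal to the two spectral sequences of a double complex, with the freeness of each $C_n(\tilde X)$ over $R[\pi]$ being exactly what forces one of the two spectral sequences to collapse on schedule in each case.
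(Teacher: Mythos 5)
Your proposal is correct and follows essentially the same route as the paper: a projective resolution $P_*\to M$ and the bicomplex $C_*(\tilde X)\otimes_{R[\pi]}P_*$ (dually, an injective resolution $N\to I^*$ and $\Hom_{R[\pi]}(C_*(\tilde X),I^*)$), with the two filtrations giving the target and the $\Tor$/$\Ext$ term, the collapse being forced by the freeness of each $C_n(\tilde X)$ over $R[\pi]$. The extra details you supply (the identification of $H^p(\Hom_{R[\pi]}(C_\bullet(\tilde X),I^q))$ with $\Hom_{R[\pi]}(H_p(\tilde X),I^q)$ via injectivity of $I^q$) are exactly what the paper leaves implicit in saying the cohomology argument is similar.
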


Up to notation, these are the spectral sequences given by Cartan and
Eilenberg in \cite[p.~355]{CE}.

\begin{proof}  In the $E^2$ and $E_2$ terms, $p$ is the homological degree
and $q$ is the internal grading on $H_*(\tilde{X})$.  Let $\varepsilon\colon
P_*\to M$ be an $R[\pi]$-projective resolution of $M$ and form the
bicomplex
\[ C_*(\tilde{X})\otimes_{R[\pi]} P_*;\]
the theorem comes from looking at the two spectral sequences associated to
this bicomplex and converging to a common target.

If we filter $C_*(\tilde{X})\otimes_{R[\pi]} P_*$ by the degrees of
$C_*(\tilde{X})$, we get a spectral sequence whose $E^0$-term has
differential $\id\otimes d$.  Since $C_*(\tilde{X})$ is a projective
$R[\pi]$ module, the resulting $E^1$-term is
$C_*(\tilde{X})\otimes_{R[\pi]}M$, the resulting $E^2$-term is $H_*(X;M)$,
and $E^2=E^{\infty}$.  Since $E^\infty$ is concentrated in degree $q=0$,
there is no extension problem; we have identified the target as claimed in
the theorem.

Filtering the other way, by the degrees of $P_*$, we obtain a spectral
sequence whose $E^0$-term has differential $d\otimes \id$.  The resulting
$E^1$-term is $H_*(\tilde{X})\otimes_{R[\pi]}P_*$ and the resulting
$E^2$-term is $\Tor^{R[\pi]}_{*,*}(H_*(\tilde{X}),M)$.  This gives the
first statement of the theorem.

The argument in cohomology is similar, starting from the bicomplex
\[\Hom_{R[\pi]}(C_*(\tilde{X}),I^*)\]
for an injective resolution $\eta\colon N\to I^*$ of $N$.
\end{proof}

We record an immediate corollary.

\begin{cor}\label{cor::CE}
Let $\pi$ be a finite group of order $n$ and $R$ be a field of
characteristic prime to $n$.  Then
\[ H_*(X;M) \cong H_*(\tilde{X})\otimes_{R[\pi]}M
\hspace{12pt} \text{and}\hspace{12pt} 
H^*(X;N) \cong \Hom_{R[\pi]}\left(H_*(\tilde{X}),N\right). \]
\end{cor}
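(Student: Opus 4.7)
The plan is to apply \autoref{thm::CE} together with Maschke's theorem. Under the hypothesis that $|\pi| = n$ is invertible in the field $R$, the group ring $R[\pi]$ is semisimple, so every $R[\pi]$-module is both projective and injective. Consequently the derived functors vanish in positive degree:
\[ \Tor^{R[\pi]}_p(-,M) = 0 \quad\text{and}\quad \Ext_{R[\pi]}^p(-,N) = 0 \quad\text{for } p \geq 1. \]

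Feeding this into the Eilenberg spectral sequences of \autoref{thm::CE}, the $E^2$-pages are concentrated in the column $p = 0$, so the spectral sequences collapse with $E^2 = E^\infty$. Since only one column is nonzero, the filtration on the abutment has a single nontrivial layer in each total degree and there is no extension problem. The remaining column is
\[ \Tor^{R[\pi]}_{0,q}(H_*(\tilde X), M) = H_q(\tilde X)\otimes_{R[\pi]} M \]
in the homology case and
\[ \Ext_{R[\pi]}^{0,q}(H_*(\tilde X), N) = \Hom_{R[\pi]}(H_q(\tilde X), N) \]
in the cohomology case, which yields the claimed isomorphisms after assembling over $q$.

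There is no real obstacle here; the only subtlety is verifying that Maschke's theorem applies in the stated generality (any field $R$ with $\mathrm{char}(R)\nmid n$) and noting that the collapse is genuinely on a single column, so no \emph{ad hoc} splitting argument is needed to resolve extensions. The corollary is essentially a direct readout from \autoref{thm::CE}.
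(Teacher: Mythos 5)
Your proposal is correct and follows exactly the paper's argument: semisimplicity of $R[\pi]$ (Maschke) forces the $E^2$- and $E_2$-terms of \autoref{thm::CE} to vanish for $p>0$, so both spectral sequences collapse onto the $p=0$ column, giving $H_*(\tilde X)\otimes_{R[\pi]}M$ and $\Hom_{R[\pi]}(H_*(\tilde X),N)$ with no extension issues. The extra detail you supply about the single-column collapse is just an expansion of the paper's two-line proof, not a different route.
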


\begin{proof}
Since $R[\pi]$ is semi-simple, $E_{p,q}^2 = 0$ and $E_2^{p,q} = 0$ for
$p>0$.  Therefore the spectral sequences collapse to the claimed
isomorphisms.
\end{proof}

If $\pi$ acts trivially on $H_*(\tilde{X})$, so that $H_*(\tilde{X}) \cong
H_*(\tilde{X})\otimes_{R[\pi]} R$, then the situation simplifies even
further.  For ease of notation, let $M_\pi$ denote the coinvariants
$M/{IM}$, where $I\subset  R[\pi]$ is the augmentation ideal, and let
$N^\pi$ denote the fixed points of $N$.

\begin{cor}\label{cor::cor::CE}
Suppose that we are in the situation of \autoref{cor::CE} and that $\pi$
acts trivially on $H_*(\tilde{X})$.  Then
\[ H_*(X;M) \cong H_*(\tilde{X};M_\pi) \hspace{12pt} \text{and}\hspace{12pt}
H^*(X;N) \cong H^*(\tilde{X};N^\pi).\qedhere\]
\end{cor}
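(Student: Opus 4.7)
The plan is to chain together \autoref{cor::CE} with an elementary change-of-rings argument and the universal coefficient theorem for field coefficients. The key point is that once we know the spectral sequences collapse, the right-hand side is a very simple algebraic expression in $H_*(\tilde X)$ that becomes even simpler when the $\pi$-action on $H_*(\tilde X)$ is trivial.

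First I would invoke \autoref{cor::CE} to get
\[ H_*(X;M) \cong H_*(\tilde X)\otimes_{R[\pi]} M \quad\text{and}\quad H^*(X;N) \cong \Hom_{R[\pi]}(H_*(\tilde X),N). \]
Next, using the hypothesis that $\pi$ acts trivially on $H_*(\tilde X)$, I would rewrite $H_*(\tilde X)$ as $H_*(\tilde X)\otimes_R R$ with $R$ carrying the trivial $\pi$-action, and slide $R$ across the tensor product to obtain
\[ H_*(\tilde X)\otimes_{R[\pi]} M \cong H_*(\tilde X)\otimes_R \bigl(R\otimes_{R[\pi]} M\bigr) \cong H_*(\tilde X)\otimes_R M_\pi. \]
The parallel argument in cohomology observes that any $R[\pi]$-linear map out of a trivial $R[\pi]$-module automatically lands in the fixed points, so
\[ \Hom_{R[\pi]}(H_*(\tilde X),N) \cong \Hom_R(H_*(\tilde X),N^\pi). \]

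Finally I would apply the universal coefficient theorem over the field $R$: because every $R$-module is flat and projective, there are no $\Tor$ or $\Ext$ obstructions and we have the natural isomorphisms
\[ H_*(\tilde X;M_\pi) \cong H_*(\tilde X)\otimes_R M_\pi, \qquad H^*(\tilde X;N^\pi) \cong \Hom_R(H_*(\tilde X),N^\pi). \]
Combining these with the previous two displays gives the stated identifications.

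There is no real obstacle here; the only thing to be careful about is the direction of the actions in the tensor-product manipulation (ensuring that $M$ as a left module is correctly turned into its coinvariants via $R\otimes_{R[\pi]}(-)$) and the dual assertion that $\Hom_{R[\pi]}(-,N)$ out of a trivial module is precisely $\Hom_R(-,N^\pi)$. These are both standard and the proof should occupy only a few lines.
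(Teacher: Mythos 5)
Your proof is correct and follows the same implicit route the paper suggests: Corollary \ref{cor::CE} reduces everything to $H_*(\tilde X)\otimes_{R[\pi]} M$ and $\Hom_{R[\pi]}(H_*(\tilde X),N)$, triviality of the $\pi$-action lets you pass to $M_\pi$ and $N^\pi$ via the standard $R\otimes_{R[\pi]}(-)$ and $\Hom_{R[\pi]}(R,-)$ identifications, and the universal coefficient theorem over a field finishes the job. The paper leaves this entirely to the reader (the corollary is marked as immediate with no written proof), so your write-up simply spells out the steps that the text preceding the corollary alludes to.
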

\vspace{-4.5ex}

\hfill$\Box$
\vspace{2.5ex}

It remains to identify the homology and cohomology groups of
\autoref{def::stdloc} with classical (co)homology with local coefficients.
To do this, we first need to reconcile our coefficient $R[\pi]$-modules
with the classical definition of a local coefficient system.

In \autoref{def::stdloc}, we took $M$ and $N$ to be left and right modules
over the group ring $R[\pi]$ and took $C_*(\tilde{X})$ to be the normalized
singular chains of $\tilde{X}$.  A (left or right) $R[\pi]$-module $M$ is
the same as a (covariant or contravariant) functor from $\pi$, viewed as a
category with a single object, to the category of $R$-modules.  

As usual, given a space $X$, let $\Pi X$ be the fundamental groupoid of
$X$; this is a category whose objects are the points of $X$ and whose
morphism sets are homotopy classes of paths between fixed endpoints.
By definition, a \defword{local coefficient system} $\sM$ on $X$ is
a functor (covariant or contravariant depending on context, corresponding
to our left and right $R[\pi]$-module distinction above) from the
fundamental groupoid $\Pi X$ to the category of $R$-modules.   When
$X$ is path-connected with basepoint $x_0$, the category $\pi =\pi_1(X)$
with single object $x_0$ is a skeleton of $\Pi X$.  Therefore a coefficient
system $\sM$ is determined by its restriction $M$ to $\pi$.  

Whitehead \cite[VI.3.4 and 3.4*]{GW} (see
also Hatcher \cite[3H.4]{Hat}) proves the following result and ascribes it 
to Eilenberg \cite{Eil}.  

\begin{thm}[Eilenberg]\label{thm::comparison}
For path-connected spaces $X$ and covariant and contravariant local
coefficient systems $\sM$ and $\sN$ on $X$, the classical homology
and cohomology with local coefficients $H_*(X;\sM)$ and $H^*(X;\sN)$
are naturally isomorphic to the homology and cohomology groups $H_*(X;M)$
and $H^*(X;N)$, where $M$ and $N$ are the restrictions of $\sM$ and $\sN$
to $\pi$.  \end{thm}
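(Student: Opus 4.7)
The plan is to produce an explicit natural chain-level isomorphism between the classical complex $C_*(X;\sM)$ computing homology with local coefficients in the fundamental-groupoid sense and the complex $C_*(\tilde X) \otimes_{R[\pi]} M$ of \autoref{def::stdloc}, and then verify that it intertwines the two boundary operators. The whole argument rests on the standard observation that, after choosing a basepoint $\tilde x_0$ over $x_0$, the complex $C_*(\tilde X)$ is free as an $R[\pi]$-module with basis obtained by choosing one lift $\tilde\sigma\colon\Delta^n\to\tilde X$ for each singular simplex $\sigma\colon\Delta^n\to X$.

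First I would recall that each point $\tilde x\in\tilde X$ corresponds to a homotopy class of path $\gamma_{\tilde x}$ from $x_0$ to $p(\tilde x)$, so that $\sM(\gamma_{\tilde x})\colon M = \sM(x_0) \to \sM(p(\tilde x))$ is an isomorphism. Using this, I define a candidate map $\Phi\colon C_*(X;\sM) \to C_*(\tilde X) \otimes_{R[\pi]} M$ on a generator $\sigma\otimes m$ with $m\in\sM(\sigma(v_0))$ by picking any lift $\tilde\sigma$ of $\sigma$ and setting $\Phi(\sigma\otimes m) = \tilde\sigma\otimes \sM(\gamma_{\tilde\sigma(v_0)})^{-1}(m)$. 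Replacing the lift by $\tilde\sigma\cdot g$ for $g\in\pi$ precomposes $\gamma_{\tilde\sigma(v_0)}$ with the loop represented by $g$, which multiplies the $M$-factor by $g^{-1}$; this is exactly the coequalizer relation that makes $\Phi$ well-defined on the tensor product over $R[\pi]$.

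Next I would verify that $\Phi$ is a chain map. For faces $\partial_i$ with $i>0$, the initial vertex of the $i$-th face is the same as that of $\Delta^n$, so the identification $M\cong\sM(\cdot)$ is unchanged and the two formulas match term by term. For $i=0$, the initial vertex of $\partial_0\tilde\sigma$ is $\tilde\sigma(v_1)$, so the corresponding path from $x_0$ is $\gamma_{\tilde\sigma(v_0)}$ concatenated with $\sigma|_{[v_0,v_1]}$; this is precisely the parallel-transport correction appearing in the classical boundary formula $\partial(\sigma\otimes m) = (\partial_0\sigma)\otimes\sM([\sigma|_{[v_0,v_1]}])(m) + \sum_{i>0}(-1)^i(\partial_i\sigma)\otimes m$. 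With compatibility of the differentials in hand, $\Phi$ is a chain isomorphism for the bookkeeping reason that $C_n(\tilde X)\otimes_{R[\pi]}M \cong \bigoplus_\sigma M$ on the chosen basis of lifts, while $C_n(X;\sM) \cong \bigoplus_\sigma \sM(\sigma(v_0))$, and the basepoint-path identifications are exactly what $\Phi$ encodes.

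The cohomology case is dual: the map $\Psi$ sends a local cochain $c$ with $c(\sigma)\in\sN(\sigma(v_0))$ to the $R[\pi]$-linear homomorphism $C_n(\tilde X)\to N$ taking each chosen lift $\tilde\sigma$ to $\sN(\gamma_{\tilde\sigma(v_0)})^{-1}(c(\sigma))$, extended $\pi$-equivariantly. Naturality in $X$, $\sM$, and $\sN$ then follows directly from the functoriality of lifting along based maps. The one genuinely substantive step, and the main obstacle, is confirming that the parallel-transport factor attached to the $\partial_0$-face in the classical complex corresponds exactly to the base-vertex shift $\tilde\sigma(v_0)\rightsquigarrow\tilde\sigma(v_1)$ induced by the face map in the universal cover; once this compatibility is verified, everything else is routine.
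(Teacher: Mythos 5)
The paper does not actually prove this theorem; it simply cites Whitehead \cite[VI.3.4 and 3.4*]{GW} and Hatcher \cite[3H.4]{Hat} and ascribes the result to Eilenberg. Your argument is essentially the standard proof found in those sources: construct the explicit chain-level isomorphism $C_*(X;\sM)\cong C_*(\tilde X)\otimes_{R[\pi]}M$ by parallel transport along the basepoint path of each chosen lift, check well-definedness via the $\pi$-freeness of $C_*(\tilde X)$, and verify that the $\partial_0$-face transport in the classical twisted boundary matches the shift of initial vertex in $\tilde X$ — which, as you correctly identify, is the one substantive step. The argument is correct. One small convention slip in the cohomology half: since $\sN$ is contravariant, $\sN(\gamma_{\tilde\sigma(v_0)})$ already maps $\sN(\sigma(v_0))\to N$, so the formula should use $\sN(\gamma_{\tilde\sigma(v_0)})$ rather than its inverse.
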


Therefore \autoref{thm::CE} gives a way to compute the additive structure of
homology and cohomology with local coefficients.  In particular, if
$f\colon E\to X$ is a fibration with fiber $F$ and path-connected base
space $X$, it gives a means to compute the homology and cohomology with
local coefficients that appear in
\[ E_{*,*}^2 = H_*(X;\sH_*(F;R))\hspace{12pt} \text{and}\hspace{12pt} 
E_2^{*,*} = H^*(X;\sH^*(F;R)) \]
of the Serre spectral sequences for the computation of $H_*(E ; R)$ and
$H^*(E ; R)$.

Even the case when $\pi$ is finite of order $n$ and $R$ is a field of
characteristic prime to $n$ often occurs in practice.  More generally, the
spectral sequences of \autoref{thm::CE} help make the Serre spectral
sequence amenable to explicit calculation in the presence of non-trivial
local coefficient systems. 

Note that we have not yet addressed the multiplicative structure of the
cohomological Eilenberg spectral sequence; this is work in progress and is
discussed in the author's Ph.D. thesis.  However, even without the
multiplicative structure, we can already do one example.  It will be useful
to have the following simple consequence of \autoref{def::stdloc}.

\begin{prop}\label{prop::hzero}
Let $X$ be a space and $\pi$ its fundamental group.  For any
$R[\pi]$-module $M$, $H^0(X;M) \cong M^\pi$, the $\pi$-fixed points of $M$.
If $M$ is an $R[\pi]$-algebra, the isomorphism is as algebras.
\end{prop}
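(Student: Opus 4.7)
The plan is to unwind \autoref{def::stdloc} in degree zero and observe that every step is essentially forced. Since we are using normalized chains with $C_{-1}(\tilde X) = 0$, there are no coboundaries, so $H^0(X;M)$ is literally the kernel of
\[ d^*\colon \Hom_{R[\pi]}(C_0(\tilde X),M) \to \Hom_{R[\pi]}(C_1(\tilde X),M). \]
A cocycle is an $R[\pi]$-linear map $\phi\colon C_0(\tilde X)\to M$ satisfying $\phi(\sigma(1)) - \phi(\sigma(0)) = 0$ for every singular $1$-simplex $\sigma$ in $\tilde X$; because $\tilde X$ is path-connected, this forces $\phi$ to take the same value on every point of $\tilde X$.

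Next, I would fix a lift $\tilde x_0 \in \tilde X$ of the basepoint and send a cocycle $\phi$ to $\phi(\tilde x_0) \in M$. Since $\pi$ acts freely and transitively on the fiber of $\tilde X \to X$ over $x_0$ by deck transformations, $R[\pi]$-equivariance together with constancy forces $\phi(\tilde x_0) = \phi(\tilde x_0 \cdot g) = \phi(\tilde x_0)\cdot g$ for every $g\in\pi$, so this evaluation lands in $M^\pi$. Conversely, any $m\in M^\pi$ determines an $R[\pi]$-linear cocycle $\phi_m$ by sending every point of $\tilde X$ to $m$ and extending $R$-linearly; fixedness of $m$ gives equivariance, and constancy gives the cocycle condition. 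These two constructions are manifestly inverse to each other, establishing the additive isomorphism $H^0(X;M) \cong M^\pi$.

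For the algebra statement I would appeal to the pointwise description of the cup product on $0$-cochains, $(\phi_1 \cup \phi_2)(\tilde x) = \phi_1(\tilde x)\cdot \phi_2(\tilde x)$, so that $\phi_m \cup \phi_{m'} = \phi_{mm'}$ and the bijection above intertwines products. The main obstacle I anticipate is not algebraic content but rather bookkeeping: pinning down what the cup product on cohomology with local coefficients means in the framework of \autoref{def::stdloc}, since the paper defers the full multiplicative theory to the author's thesis. In degree zero this is painless, however, because the pointwise product of two constant $\pi$-equivariant maps into the $R[\pi]$-algebra $M$ is again constant and $\pi$-equivariant, and the associativity and unit axioms are inherited from $M$ on the nose.
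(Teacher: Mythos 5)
Your argument is correct and is essentially an element-wise unwinding of the paper's proof: the paper simply notes that $H_0(\tilde X)\cong R$ with trivial action and applies left exactness of $\Hom_{R[\pi]}(-,M)$ to $C_1(\tilde X)\to C_0(\tilde X)\to R\to 0$, yielding $H^0(X;M)\cong\Hom_{R[\pi]}(R,M)\cong M^\pi$ in one line, whereas you make the same identification explicit by checking that degree-zero cocycles are exactly the constant equivariant functions and evaluating at a lift of the basepoint. Your added discussion of the multiplicative statement (which the paper asserts but does not prove) is a reasonable sketch, and the pointwise description of the cup product in degree zero does give the algebra isomorphism.
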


\begin{proof}
We would like to identify the kernel of
\[ \Hom_{R[\pi]}(C_0(\tilde{X}),M) \to \Hom_{R[\pi]}(C_1(\tilde{X}),M).\]
Since $\tilde{X}$ is connected, $H_0(\tilde{X}) \cong R$ with the trivial
$\pi$ action.  Since $\Hom_{R[\pi]}$ is left exact, it follows that
$H^0(X;M) \cong \Hom_{R[\pi]}(R,M) \cong M^\pi$, as claimed.
\end{proof}

In particular, in the Serre spectral sequence, $E_2^{0,t}
\cong \sH^t(F;R)^\pi$.  If we are in a situation where $E_2^{s,t}$ vanishes
for $s>0$, then \autoref{prop::hzero} completely describes the
multiplicative structure on the $E_2$ page.

\begin{exmp}\label{ex::noneq}
Let $\cyctwo$ be the cyclic group of order two, identified with $\{\pm 1\}$
when convenient.  Consider the fibration 
\[B\text{det}\colon BO(2)\to B\cyctwo\] 
with fiber $BSO(2)$.  Take coefficients in a finite field $R = \fq$ with
$q$ an odd prime, so that $R[\pi]$ is semisimple.  The action of the base
on the fiber is nontrivial; the Serre spectral sequence for this fibration
has
\[ E_2^{s,t} = H^s(B\cyctwo;\sH^t(BSO(2);\fq)) \Longrightarrow H^{s+t}(BO(2);\fq)
\]
We know that $B\cyctwo \simeq \bR P^\infty$ with universal cover $S^\infty
\simeq \bullet$, so \autoref{cor::cor::CE} applies, and $BSO(2) \simeq \bC
P^\infty$.  We also know $H^*(\bC P^\infty;\fq) \cong \fq[x]$, a polynomial
algebra on one generator $x$ in degree two, and that the fundamental group
$\pi_1(B\cyctwo) \cong \cyctwo$ acts on $H^*(\bC P^\infty)$ by $x\mapsto
-x$. 

By \autoref{cor::cor::CE}, we thus have
\[ E_2^{s,t} \cong H^s \left(S^\infty;\sH^t(BSO(2);\fq)^{\cyctwo} \right) \]
in the Serre spectral sequence; $\sH^t(BSO(2);\fq)^{\cyctwo}$ is either $0$
or $\fq$, depending on $t$.  $E_2^{s,t}$ thus vanishes for $s>0$, so the Serre
spectral sequence collapses with no extension problems.  Using the
observation after \autoref{prop::hzero}, we see
\[ H^*(BO(2);\fq) \cong \sH^*(BSO(2);\fq)^{\cyctwo} \cong \fq[x^2].\]
The isomorphism is of $R$-algebras.  We have thus shown the well-known fact
that $H^*(BO(2);\fq)$ is polynomial on one generator (the Pontrjagin class)
in degree four.  We will later examine an equivariant version of
this example.
\end{exmp}

\section{Equivariant generalizations}\label{sec::eq}

Heading towards an equivariant generalization of \autoref{thm::CE}, we first
rephrase the definition of (co)homology with local coefficients. 
In \autoref{sec::noneq}, we effectively defined homology and cohomology
with local coefficients by restricting a local coefficient system
$\sM\colon \Pi X \to \rmod$ to an $R[\pi]$-module $M\colon \pi \to \rmod$.

Rather than restricting $\sM$ to $\pi$, we could instead redefine
$\tilde{X}$ to be the universal cover functor $\Pi X\op \to \spaces$ that
sends a point $x\in X$ to the space $\tilde{X}(x)$ of equivalence classes
of paths starting at $x$ and sends a path $\gamma$ from $x$ to $y$ to the
map $\tilde{X}(y)\to \tilde{X}(x)$ given by precomposition with $\gamma$.
Since $\pi$ is a skeleton of $\Pi X$, the following definition is
equivalent to \autoref{def::stdloc} when $X$ is connected.  By
\autoref{thm::comparison}, there is no conflict with the classical notation
for homology with local coefficients.  Let $\chr$ denote the category of
chain complexes of $R$-modules.

\begin{defn}[Reformulation of \autoref{def::stdloc}]\label{def::altloc} 
Let $\sM\colon \Pi X\to \rmod$ and $\sN\colon \Pi X\op\to \rmod$ be
functors and let $C_*(\tilde{X})\colon \Pi X\op\to \spaces \to \chr$ be the
composite of the universal cover functor with the functor $C_*$.  Define
the homology of $X$ with coefficients in $\sM$ to be \[ H_*(X;\sM) =
H_*(C_*(\tilde{X}) \otimes_{\Pi X} \sM) \] where $\otimes_{\Pi X}$ is the
tensor product of functors (which is given by an evident coequalizer
diagram).  Similarly, define 
\[ H^*(X;\sN) = H^*\left(\Hom_{\Pi X}(C_*(\tilde{X}),\sN)\right) \]
where $\Hom_{\Pi X}$ is the hom of functors (also known as natural
transformations; alternatively, given by an evident equalizer diagram).
\end{defn}


Note that our distinctions between left and right and between covariant and
contravariant are purely semantic above, since we are dealing with groups
and groupoids.  However, we are about to consider (Bredon) equivariant
homology and cohomology. Here the fundamental ``groupoid'' is only an
EI-category (endomorphisms are isomorphisms) and the distinction is
essential. There is an equivariant Serre spectral sequence, due to Moerdijk
and Svensson \cite{MS}, but it has not yet had significant calculational
applications. The essential reason is the lack of a way to compute its
$E^2$-terms.  However, the results of \autoref{sec::noneq} generalize nicely
to compute Bredon homology and cohomology with local coefficients.

\autoref{def::altloc} generalizes directly to the equivariant case.  From now on,
let $X$ be a $G$-space, where $G$ is a discrete group.\footnote{With a
little more detail, we could generalize to topological groups.}  Following
tom Dieck \cite{tD}, we can define the \defword{fundamental EI-category}
$\fund{X}$ to be the category whose objects are pairs $(H,x)$, where $x \in
X^H$; a morphism from $(H,x)$ to $(K,y)$ consists of a $G$-map
$\alpha\colon G/H\to G/K$, determined by $\alpha(eH) = gK$, together with a
homotopy class rel endpoints $[\gamma]$ of paths from $x$ to $\alpha^*(y) =
gy$.  Here $\alpha^*\colon X^K\to  X^H$ is the map given by $\alpha^*(z)=gz$,
which makes sense because $g^{-1}Hg\subset K$.

Likewise, we follow tom Dieck in defining the equivariant universal cover
$\tilde{X}$ to be the functor $\tilde{X}\colon (\fund{X})\op \to \spaces$
which sends $(H,x)$ to $\widetilde{X^H}(x)$, the space of equivalence
classes of paths in $X^H$ starting at $x$.  For a morphism $\fmap{}\colon
(H,x)\to (K,y)$, $\tilde{X}\fmap{}\colon \tilde{X}(K,y)\to \tilde{X}(H,x)$
takes a class of paths $[\beta]$ starting at $y\in X^K$ to the class of the
composite $(\alpha^*\beta)\gamma$.

We can now define equivariant (co)homology with local coefficients.  In
fact, \autoref{def::altloc} applies almost verbatim: we need only add $G$
to the notations.  We repeat the definition for emphasis. 

\begin{defn}[Equivariant generalization of \autoref{def::altloc}]\label{def::eqloc}
Let $X$ be a $G$-space and write $\Pi = \fund{X}$.  Let $\sM\colon \Pi\to
\rmod$ and $\sN\colon \Pi\op\to \rmod$ be functors and let
$\eqchains{\tilde{X}}\colon \Pi\op\to \spaces \to \chr$ be the composite of
the equivariant universal cover functor with the functor $C_*$.  Define the
homology of $X$ with coefficients in $\sM$ to be
\[ H_*^G(X;\sM) = H_*(\eqchains{\tilde{X}} \otimes_\Pi \sM) \]
and the cohomology of $X$ with coefficients in $\sN$ by
\[ H^*_G(X;\sN) = H^*\left(\Hom_\Pi(C^G_*(\tilde{X}),\sN)\right). \]
\end{defn}

Note that we could also take $\Pi$ to be a skeleton
$\text{skel}(\fund{X})$. 

Inserting $G$ into the notations, the proofs in Whitehead or Hatcher
\cite{GW, Hat} apply to show that this definition of Bredon (co)homology
with local coefficients is naturally isomorphic to the Bredon (co)homology
with local coefficients, as defined in Mukherjee and Pandey \cite{MP},
which they in turn show is naturally isomorphic to the (co)homology with
local coefficients, as defined and used by Moerdijk and Svensson in
\cite{MS} to construct the equivariant Serre spectral sequence of a
$G$-fibration $f\colon E\to B$.

We quickly review the homological algebra needed for the equivariant
generalization of \autoref{thm::CE}. Since $\rmod$ is an Abelian category, the
categories $[\fundx{},\rmod]$ and $[\fundx{}\op,\rmod]$ of functors from
$\fundx{}$ to $\rmod$ are also Abelian, with kernels and cokernels defined
levelwise.  These categories have enough projectives and injectives, which
by the Yoneda lemma are related to the represented functors.

Specifically, let $R-$ denote the free $R$-module functor
$\textbf{Set}\to\rmod$.  Given an object $(H,x)\in \Pi$, let
$\covproj{H,x}$ be the covariant represented functor $\Pi\to\rmod$
given on objects by
\[ \covproj{H,x}(K,y) = R\fundhom{(H,x)}{(K,y)}.\]
By the Yoneda lemma, each $\covproj{H,x}$ is projective.  Therefore, given
a functor $\sM$, we can construct an epimorphism $\sP\to\sM$ with $\sP$
projective by taking $\sP$ to be a direct sum of representables
\[ \sP = \bigoplus_{(H,x)}\bigoplus_{\sM(H,x)}\covproj{H,x},\]
one for each element of each $R$-module $\sM(H,x)$.  Similarly, there are
contravariant represented functors $\contraproj{H,x}\colon \Pi\op\to\rmod$
given by
\[ \contraproj{H,x}(K,y) = R\fundhom{(K,y)}{(H,x)}. \]
The same argument shows that these are projective and that $[\Pi\op,\rmod]$
has enough projectives.

The construction of the injective objects is dual but perhaps less
familiar.  Given an $R$-module $A$ and $(H,x)\in\Pi$, we define a functor
$\covinj{H,x,A}\colon \Pi\to\rmod$ by
\[ \covinj{H,x,A}(K,y) = \Hom_R(\contraproj{H,x}(K,y),A) \]
Whenever $A$ is an injective $R$-module, $\covinj{H,x,A}$ is an injective
object in $[\Pi,\rmod]$.  This comes from a more general fact.  
For any coefficient system $\sA\colon \Pi\to\rmod$, there is a tensor-hom
adjunction
\[ [\Pi,\rmod](\sA,\covinj{H,x,A}) \cong \rmod(\sA\otimes_\Pi
\contraproj{H,x},A) \]
where again $\otimes_\Pi$ is the tensor product of functors.  The tensor
product of any functor with a representable functor $\contraproj{H,x}$ is
given by evaluation at $(H,x)$.  Putting these two facts together, we have
that a natural transformation from $\sA$ to $\covinj{H,x,A}$ is given by
the same data as a homomorphism of $R$-modules from $\sA(H,x)$ to $A$.  It
is then clear that, if $A$ is an injective $R$-module, $\covinj{H,x,A}$
must be an injective object of $[\Pi,\rmod]$, as desired.  Given any
$\sN\colon \Pi\to\rmod$, we can construct an injective coefficient system
$\sI$ and a monomorphism $\sN\to\sI$ as follows.  Choose monomorphisms
$\sN(H,x) \hookrightarrow A_{H,x}$ for each $(H,x)$ with $A_{H,x}$
injective, and define $\sI$ to be the product of injective functors
\[ \sI = \prod_{(H,x)} \covinj{H,x,A_{H,x}}. \]
It can be checked that the evident map $\sN\to\sI$ is a monomorphism.
Thus $[\Pi,\rmod]$ has enough injectives.  The functors 
$\contrainj{H,x,A} = \Hom_R(\covproj{H,x}(-),A)$ show that $[\Pi\op,\rmod]$
has enough injectives as well.

Finally, we define $\Tor^\fundx{}(\sN,\sM)$ in the obvious way.  It is the
homology of the complex of $R$-modules that is obtained by taking the
tensor product of functors of $\sN$ with a projective resolution of the
functor $\sM$.  We define $\Ext_\fundx{}(\sN_1,\sN_2)$ similarly, taking
the hom of functors of $\sN_1$ with an injective resolution of
$\sN_2$.\footnote{Alternatively, we could define $\Ext_{\fundx{}}(\sN_1,\sN_2)$
by taking a projective resolution of $\sN_1$; however, it is the definition
above which gives rise to the appropriate spectral sequence.}

The following equivariant analogue of the nonequivariant statement that
$C_*(\tilde{X})$ is a free $R[\pi]$-module should be a standard first
observation in equivariant homology theory, but the author has not seen it
in the literature.  The nonequivariant assertion, while obvious, is the
crux of the proof of \autoref{thm::CE}.  Let $\contraproj{H,x} =
R\fundhom{-}{(H,x)}$, as above.

\begin{lem}\label{lem::technical}
With $\Pi = \fund{X}$, each functor $C_n^G(\tilde{X})\colon \Pi\op \to
\rmod$ is a direct sum of representable functors
$\bigoplus_{(H_i,x_i)}\contraproj{H_i,x_i}$. 
\end{lem}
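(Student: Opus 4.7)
The plan is to construct an explicit natural isomorphism $C_n^G(\tilde X) \cong \bigoplus_i \contraproj{H_i, x_i}$ via a basis-level bijection at each object of $\Pi$. The preliminary observation, from covering space lifting theory, is that a singular $n$-simplex $\sigma\colon\Delta^n\to\widetilde{X^H}(x)$ is determined by its projection $\bar\sigma\colon\Delta^n\to X^H$ together with a lift of the initial vertex $v_0\in\Delta^n$, equivalently a homotopy class $[\gamma]$ of paths in $X^H$ from $x$ to $\bar\sigma(v_0)$. Thus $C_n(\widetilde{X^H}(x))$ is the free $R$-module on such pairs $(\bar\sigma, [\gamma])$.

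To set up the decomposition, I would let $G$ act on the set of singular $n$-simplices $\Delta^n \to X$ by post-composition, pick a representative $\bar\sigma_i$ for each orbit, and set $H_i = \mathrm{Stab}_G(\bar\sigma_i)$ and $x_i = \bar\sigma_i(v_0)$; then $\bar\sigma_i$ factors through $X^{H_i}$. Evaluated at $(H, x)$, the right-hand side is the free $R$-module on $\bigsqcup_i \fundhom{(H, x)}{(H_i, x_i)}$. I would define the bijection on basis elements by sending $(\bar\sigma, [\gamma])$ on the left (with $\bar\sigma\colon \Delta^n \to X^H$) to a morphism $(\alpha, [\gamma])\colon (H, x) \to (H_i, x_i)$ as follows: since $\bar\sigma$ lies in a unique $G$-orbit, $\bar\sigma = g\bar\sigma_i$ for a unique $i$ and some $g \in G$ (which is unique modulo right multiplication by $H_i$). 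The fact that $H$ stabilizes $\bar\sigma = g\bar\sigma_i$ then gives $g^{-1}Hg\subset H_i$, so $\alpha\colon G/H\to G/H_i$, $eH\mapsto gH_i$, is a well-defined $G$-map, and $\bar\sigma(v_0) = gx_i = \alpha^*(x_i)$, so $[\gamma]$ joins $x$ to $\alpha^*(x_i)$ in $X^H$. The inverse, sending $(\alpha, [\gamma])$ with $\alpha(eH) = gH_i$ to $(g\bar\sigma_i, [\gamma])$, is immediate, as is the verification that both constructions are well-defined modulo the $H_i$-ambiguity in $g$.

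The remaining step is naturality in $(H, x)$. A morphism $\fmap{}\colon (H, x)\to (K, y)$ in $\Pi$, with $\alpha_{}(eH) = g_{}K$ and $[\gamma_{}]$ from $x$ to $g_{}y$, acts on the left by sending $(\bar\sigma, [\gamma_\sigma])\in C_n(\widetilde{X^K}(y))$ to $(g_{}\bar\sigma, [(g_{}\gamma_\sigma)\gamma_{}])\in C_n(\widetilde{X^H}(x))$, following the formula for $\tilde X(\fmap{})$ given in the paper. On the right, precomposition with $\fmap{}$ sends $(\beta, [\gamma_\sigma])\in \fundhom{(K, y)}{(H_i, x_i)}$, with $\beta(eK) = g'H_i$, to a morphism whose $G$-map sends $eH$ to $g_{}g'H_i$ and whose path is $(g_{}\gamma_\sigma)\gamma_{}$. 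The two match under the bijection because $g_{}\bar\sigma = (g_{}g')\bar\sigma_i$.

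The principal obstacle is the bookkeeping around the choice of $g$ with $\bar\sigma = g\bar\sigma_i$: verifying that $(\alpha, [\gamma])$ is well-defined even though $g$ is unique only modulo $H_i$, and carefully tracking $G$-translates of paths in the naturality check. The essential content of the lemma is the equivariant analog of the nonequivariant freeness of $C_n(\tilde X)$ as an $R[\pi]$-module, with orbit data now recorded by the isotropy subgroups $H_i$.
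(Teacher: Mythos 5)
Your argument is correct and is essentially the paper's proof: the paper likewise generates $C_n^G(\tilde{X})$ by one maximal-isotropy simplex per orbit (phrased there as based simplices into the covers $\tilde{X}(K,y)$ admitting no factorization through a non-isomorphism of $\Pi$, one per equivalence class) and then checks that the resulting map of functors is a levelwise bijection, which is exactly your basis-level bijection via the pair description $(\bar\sigma,[\gamma])$. The only adjustment is to take $G$-orbits of \emph{nondegenerate} simplices, since $C_*$ is the normalized chain functor; post-composition by elements of $G$ and the lifting correspondence both preserve nondegeneracy, so nothing else in your argument changes.
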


Granting this result for the moment, we can prove the equivariant
generalization of \autoref{thm::CE}.

\begin{thm}[Equivariant Eilenberg Spectral Sequence]\label{thm::eqCE}
With $\Pi = \fund{X}$, there are spectral sequences
\[ E_{p,q}^2 = \Tor_{p,q}^\fundx{}(\coh_*(\tilde{X}),\sM) \Longrightarrow
H_{p+q}^G(X;\sM) \]
and
\[  E_2^{p,q} = \Ext_{\fundx{}}^{p,q}(\coh_*(\tilde{X}),\sN) \Longrightarrow  H^{p+q}_G(X;\sN).  \]
\end{thm}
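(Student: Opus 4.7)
The plan is to imitate the proof of \autoref{thm::CE}, replacing $R[\pi]$-modules with the functor category $[\Pi,\rmod]$ and using \autoref{lem::technical} as the critical projectivity input in place of the freeness of $C_*(\tilde{X})$ over $R[\pi]$. What I need to start is exactness of the tensor product $\otimes_\Pi$ in each variable whenever the other slot is a direct sum of representables; this follows from the Yoneda-type identifications $\sA \otimes_\Pi \contraproj{H,x} \cong \sA(H,x)$ for $\sA\in[\Pi,\rmod]$, and dually $\sB \otimes_\Pi \covproj{H,x} \cong \sB(H,x)$ for $\sB\in[\Pi\op,\rmod]$. Combined with \autoref{lem::technical}, this shows that $C_n^G(\tilde{X}) \otimes_\Pi (-)$ is exact. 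Dually, any projective $\sP$ in $[\Pi,\rmod]$ is a direct summand of a direct sum of covariant representables $\covproj{H,x}$, so $(-) \otimes_\Pi \sP$ is exact as well.

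With these two exactness facts in hand, I would choose a projective resolution $\varepsilon\colon \sP_\bullet \to \sM$ in $[\Pi,\rmod]$ and form the first-quadrant bicomplex
\[ \eqchains{\tilde{X}} \otimes_\Pi \sP_\bullet. \]
Filtering first by the degree in $\eqchains{\tilde{X}}$, the $E^0$-differential is $\id\otimes d$, so by the first exactness statement the $E^1$-term collapses onto the row $q=0$ with value $\eqchains{\tilde{X}} \otimes_\Pi \sM$. Hence $E^2 = E^\infty = H_*^G(X;\sM)$, identifying the common abutment with no extension problem. Filtering instead by the degree in $\sP_\bullet$, the $E^0$-differential is $d\otimes \id$, and by the second exactness statement $E^1 = \coh_*(\tilde{X}) \otimes_\Pi \sP_\bullet$, giving $E^2 = \Tor_{p,q}^{\fundx{}}(\coh_*(\tilde{X}),\sM)$ as required.

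The cohomological statement will be proved dually: choose an injective resolution $\eta\colon \sN \to \sI^\bullet$ in $[\Pi\op,\rmod]$ and work with the bicomplex $\Hom_\Pi(\eqchains{\tilde{X}},\sI^\bullet)$. The Yoneda identification $\Hom_\Pi(\contraproj{H,x},\sN) \cong \sN(H,x)$ combined with \autoref{lem::technical} shows that $\Hom_\Pi(C_n^G(\tilde{X}),-)$ is exact, and exactness of $\Hom_\Pi(-,\sI^q)$ for each injective $\sI^q$ handles the other filtration; the two spectral sequences then yield the abutment $H^*_G(X;\sN)$ and the $E_2$-term $\Ext^{p,q}_{\fundx{}}(\coh_*(\tilde{X}),\sN)$ in the now-familiar way. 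The only ingredient particular to the equivariant setting is \autoref{lem::technical}; once that is in hand, everything else is a mechanical translation of the bicomplex argument in \autoref{thm::CE}, and I expect no genuine difficulty beyond keeping track of variances.
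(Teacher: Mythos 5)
Your proposal is correct and follows essentially the same route as the paper: form the bicomplex $\eqchains{\tilde{X}} \otimes_\Pi \sP_\bullet$, filter both ways, use \autoref{lem::technical} to identify the abutment, and dualize for cohomology. You are slightly more explicit than the paper in spelling out the second exactness statement (that $(-)\otimes_\Pi \sP_p$ is exact because each $\sP_p$ is a direct summand of a sum of covariant representables), which the paper leaves implicit when asserting $E^1 = \coh_*(\tilde{X})\otimes_\Pi \sP_*$; but the substance and structure of the argument are identical.
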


Here the functor $\coh_*(\tilde{X})\colon \fundx{}\op\to \rmod$ is the
homology of the chain complex functor $\eqchains{\tilde{X}}$; that is,
$\coh_*(\tilde{X})(H,x)$ is the homology of the chain complex
$C_*(\tilde{X})(H,x)$.

\begin{proof}
Let $\varepsilon\colon \sP_*\to\sM$ be a projective resolution of $\sM$.  As in
the nonequivariant theorem, form the bicomplex of $R$-modules
$C_*(\tilde{X}) \otimes_\fundx{} \sP_*$.  Since the tensor product of a
functor with a representable functor is given by evaluation,
\[ \contraproj{H,x} \otimes_\fundx{} \sM \cong \sM(H,x),\]
tensoring with such projective modules is exact.

In particular, if we filter our bicomplex by degrees of $C_*(\tilde{X})$,
then $d^0 = \id\otimes d$.  By \autoref{lem::technical}, each
$C_n^G(\tilde{X})$ is projective, and so we get a spectral sequence with
$E^1$-term $C_*(\tilde{X})\otimes_\fundx{} \sM$.  Thus the resulting $E^2 =
E^\infty$ term is $H_*(X,\sM)$, exactly as in the nonequivariant case.

If we instead filter by degrees of $\sP_*$, so $d^0 = d\otimes\id$, then
the $E^1$ term is $\coh_*(\tilde{X})\otimes_\fundx{}\sP_*$ and the $E^2$
term is $\Tor_{*,*}^\fundx{}(\coh_*(\tilde{X}),\sM)$, as desired.

The construction of the second spectral sequence is similar, starting from
an injective resolution $\eta\colon \sN \to \sI^*$.
\end{proof}

\begin{proof}[Proof of \autoref{lem::technical}]
The proof is analogous to that of the nonequivariant result, but more
involved.  We may identify $C_n^G(\tilde{X})(H,x)$ with the free $R$-module
on generators given by the nondegenerate singular $n$-simplices
$\sigma\colon \Delta^n\to \tilde{X}(H,x)$.  We must show that these free
$R$-modules piece together appropriately into a free functor.  More
specifically, by the Yoneda lemma, each $\sigma\colon \Delta^n\to
\tilde{X}(K,y)$ determines a natural transformation
\[\iota_\sigma\colon \contraproj{K,y}\to C_n^G(\tilde{X})  \]
that takes $\id \in \fundhom{(K,y)}{(K,y)}$ to $\sigma$.  We thus obtain a
natural transformation 
\[ \bigoplus_{\{\tau\}} \contraproj{K_\tau,y_\tau} \to C_n^G(\tilde{X}) \]
from any set of nondegenerate $n$-simplices $\{\tau\colon \Delta^n\to
\tilde{X}(K_\tau,y_\tau)\}$. We must show that there is a set $\{\tau\}$
such that the resulting natural transformation is a natural isomorphism,
that is, a levelwise isomorphism.  This amounts to showing that the
following statements hold for our choice of generators $\tau$ and each
object $(H,x)$.  
\begin{enumerate}
\item (Injectivity) For any arrows $\fmap{1}$ and $\fmap{2}$ in $\fundx{}$
with source $(H,x)$ and any generators $\tau_{1}$ and $\tau_{2}$,
$\fmap{1}^*\tau_{1} = \fmap{2}^*\tau_{2}$ must imply that both $\fmap{1} =
\fmap{2}$ and $\tau_{1} = \tau_{2}$.  

\item (Surjectivity) For every $\sigma\colon \Delta^n\to \tilde{X}(H,x)$,
there must be a generator $\tau$ and an arrow $\fmap{}$ such that $\sigma =
\fmap{}^* \tau$.
\end{enumerate}

Fixing $n$, define the generating set as follows.  Regard the initial
vertex $v$ of $\Delta^n$ as a basepoint.  Recall that $\tilde{X}(K,y)$ is
the universal cover of $X^K$ defined with respect to the basepoint $y\in
X^K$, so that the equivalence class of the constant path $c_{K,y}$ at $y$
is the basepoint of $\widetilde{X^K}$.  In choosing our generating set, we
restrict attention to based maps $\sigma\colon \Delta^n\to
\tilde{X}(K_\sigma,y_\sigma)$ that are non-degenerate $n$-simplices of
$\widetilde{X^{K_\sigma}}$.  Such maps $\sigma$ are in bijective
correspondence with based nondegenerate $n$-simplices $\sigma_0\colon
\Delta^n\to X^{K_\sigma}$.  The correspondence sends $\sigma$ to its
composite with the end-point evaluation map $p\colon
\tilde{X}(K_\sigma,y_\sigma)\to X^{K_\sigma}$ and sends $\sigma_0$ to the
map $\sigma\colon \Delta^n\to \tilde{X}(K_\sigma,y_\sigma)$ that sends a
point $a\in \Delta^n$ to the image under $\sigma_0$ of the straight-line
path from $v$ to $a$.  Restrict further to those $\sigma$ that cannot be
written as a composite 
\[ \xymatrix@1{ \Delta^n \ar[r]^-{\rho} & \tilde{X}(K',y')
\ar[rr]^-{(\alpha,\gamma)^*} & & \tilde{X}(K_\sigma,y_\sigma)\\} \]
for any non-isomorphism $(\alpha,\gamma)\colon (K',y')\to (K_\sigma,y_\sigma)$ in
$\Pi$.  Note that, for each such $\sigma$, we can obtain another such
$\sigma$ by composing with the isomorphism $(\xi,\delta)^*$ induced by an
isomorphism $(\xi,\delta)$ in $\Pi$.  We say that the resulting maps $\sigma$
are equivalent, and we choose one $\tau$ in each equivalence class of such
based singular $n$-simplices $\sigma$.

It remains to verify that the natural transformation defined by this set
$\{\tau\}$ is an isomorphism.  This is straightforward but somewhat tedious
and technical.

For the injectivity, suppose that $\fmap{1}^* \tau_{1} = \fmap{2}^*
\tau_{2}$, where $\tau_1,\tau_2$ are in our generating set and 
\begin{align*}
\tau_1\colon \Delta^n\to \tilde{X}(K_1,y_1),  & \hspace{12pt}   \fmap{1} \in \fundhom{(H,x)}{(K_1,y_1)} \\
\tau_2\colon \Delta^n\to \tilde{X}(K_2,y_2),  & \hspace{12pt} \fmap{2} \in \fundhom{(H,x)}{(K_2,y_2)}.
\end{align*}
Since $\tau_i(v) = c_{(K_i,y_i)}$ for $i = 1,2$, we see that $\fmap{i}^*
\tau_i$ must take $v$ to $[\gamma_i]$.  Since $\fmap{1}^* \tau_{1} =
\fmap{2}^* \tau_2$, this means that $[\gamma_1] = [\gamma_2]$; call this
path class $[\gamma]$.  In turn, this implies that $\alpha_1^* y_1 =
\alpha_2^* y_2$; call this point $z\in X^H$, so that $[\gamma]$ is a path from
$x$ to $z$.  Since $\fmap{i} = (\alpha_i,[c_{z}])\circ (\id,[\gamma])$ and
$(\id,[\gamma])$ is an isomorphism in $\fundx{}$, we must have 
\[ (\alpha_1,[c_z])^* \tau_1 = (\alpha_2,[c_z])^* \tau_2. \]
In particular, if we compose each side of this equation with $p$, we obtain 
\[ p \circ (\alpha_1,[c_z])^* \tau_1 = p\circ (\alpha_2,[c_z])^* \tau_2\]
as maps $\Delta^n \to X^H$.  Since we have commutative diagrams 
\[ \xymatrix{ \Delta^n \ar[r]^-{\tau_i} & \tilde{X}(K_i,y_i)
\ar[r]^-{(\alpha,[c_z])^*} \ar[d]_{p} & \tilde{X}(H,z) \ar[d]_{p} \\ &
X^{K_i} \ar[r]^{\alpha_i^*} & X^H }\]
for each $i$, this implies that we have a commutative square 
\[ \xymatrix{ \Delta^n \ar[r]^-{p\circ\tau_1} \ar[d]_{p\circ\tau_2} &
X^{K_1} \ar[d]^{\alpha_1^*} \\ X^{K_2} \ar[r]^{\alpha_2^*} & X^H }\]
If the maps $\alpha_i \colon G/H \to G/{K_i}$ are defined by elements
$g_i\in G$, this implies that the common composite $\Delta^n \to X^H$
factors through the fixed-point sets $X^{g_i K_i g_i^{-1}}$ for each $i$,
and hence through $X^L$, where $L$ is the smallest subgroup containing $g_1
K_1g_1^{-1}$ and $g_2 K_2 g_2^{-1}$.  Since $K_i \subset g_i^{-1} L g_i$,
the maps $\alpha_i\colon G/H\to G/K_i$ factor through the maps $\beta_i\colon
G/K_i\to G/L$ specified by $\beta_i(eK_i) = g_iL$ and there result
factorizations of the $\tau_i$ as 
\[ \xymatrix@1{ \Delta^n \ar[r] &  \tilde{X}(g_i^{-1} L g_i, y_i)
\ar[rr]^-{(q,[c_z])^*} & &  \tilde{X}(K_i,y_i),\\}  \]
where $q$ denotes either quotient map $G/K_i\to G/g_i^{-1} L g_i$.  By
our choice of the generators $\tau$, this can only happen if $g_i^{-1} L
g_i = K_i$, giving $g_1 K_1 g_1^{-1} = g_2 K_2 g_2^{-1}$.  In terms of
$g_1$ and $g_2$, we see that our equation $(\alpha_1,[c_z])^* \tau_1 =
(\alpha_2,[c_z])^* \tau_2$ says that $g_1 \tau_1 = g_2 \tau_2$, that is,
$\tau_2 = g_2^{-1} g_1 \tau_1$.  Since $g_2^{-1} g_1$ defines an
isomorphism $G/{K_2}\to G/{K_1}$, we again see by our choice of the
generators $\tau$ that $\tau_1 = \tau_2$ and that $g_2^{-1} g_1 \in K_1 =
K_2$.  This in turn implies that the maps $\alpha_i \colon G/H \to G/K_i$
defined by the $g_i$ are identical.  The conclusion is that $\fmap{1}^*
\tau_1 = \fmap{2}^* \tau_2$ implies $\tau_1 = \tau_2$ and $\fmap{1} =
\fmap{2}$, as desired.

It only remains to show that we have accounted for all elements of
$C_n(\tilde{X})(H,x)$.  For any map $\sigma\colon \Delta^n \to
\tilde{X}(H,x)$, $\sigma(v)$ is a homotopy class of paths from $(H,x)$ to
$(H,x')$ in $X^H$.  Call this class $[\gamma]$.  Then $(\id,[\gamma])$ is
an isomorphism with inverse $(\id,[\gamma^{-1}])$, and $\sigma' =
(\id,[\gamma^{-1}])^*\sigma$ takes $v$ to the homotopy class of the
constant path at $(H,x')$; it follows that $\sigma =
(\id,[\gamma])^*\sigma'$.  Similarly, if $\sigma'$ factors through
$\tilde{X}(K,y)$ for some $K$ properly containing a conjugate of $H$, then
by definition $\sigma = \fmap{}^*\tau$ for some $\tau\colon \Delta^n \to
\tilde{X}(K,y)$.  We can choose a $\tau$ that does not itself factor and is
in our chosen set of generators.
\end{proof}

\section{Some remarks on the Serre Spectral Sequence}

We say that a map $f\colon E\to X$ is a \defword{$G$-fibration} if $f^H
\colon E^H \to X^H$ is a fibration for every subgroup $H$ of $G$.  Observe
that, if $x\in X^H$, then its preimage $f^{-1}(x)\subset E$ is necessarily
an $H$-space.  As previously mentioned, Moerdijk and Svensson in \cite{MS}
develop an equivariant Serre spectral sequence for $G$-fibrations of
$G$-spaces, which we will now describe.  

Given a coefficient system $M\colon \sO_G\op\to \rmod$ and a subgroup $H <
G$, there is a restricted coefficient system $M|_H \colon \sO_H\op \to \rmod$
given by $M|_H (H/K) := M(G/K)$.  We may thus define a local coefficient
system
\[\sh^q(f;M )\colon \fund{X}\op\to\ab \]
to be the functor which acts on objects by
\[ (H,x) \longmapsto H^q_G(G\times_H f^{-1}(x);M ) \cong H^q_H(f^{-1}(x);M |_H).\]
It is defined on morphisms via lifting of paths.

The main result of \cite{MS} is the following spectral sequence.

\begin{thm}[Moerdijk and Svensson]
For any $G$-fibration $f\colon E\to X$ and any coefficient system $M$,
there is a natural spectral sequence 
\[ E_2^{s,t}(M) = H^s_G(X;\sh^t(f;M)) \Longrightarrow \h^{s+t}(E;M).\]
Further, this spectral sequence carries a product structure, in the sense
that there is a natural pairing of spectral sequences
\[ E_r^{s,t}(M) \otimes E_r^{s',t'}(N) \to
E_r^{s+s',t+t'}(M\otimes N) \]
converging to the standard pairing
\[ H^*_G(E;M) \otimes H^*_G(E;N) \xrightarrow{\smile}
H^*_G(Y;M\otimes N). \]
On the $E_2$ page, this pairing agrees 
with the standard pairing
\[ H^s_G(X;\sh^t(f,M))\otimes \h^{s'}(X;\sh^{t'}(f,N)) \to
H^{s+s'}_G(X;\sh^{t+t'}(f,M\otimes N)).\footnote{As usual in the Serre
spectral sequence, the standard pairing on the $E_2$ page incorporates a
sign $(-1)^{s' t}$ relative to the cup product pairing.}\]
\end{thm}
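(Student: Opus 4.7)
The plan is to build the spectral sequence from a skeletal filtration of a $G$-CW approximation of $X$. Choose a $G$-CW structure on $X$ with skeleta $X_0 \subset X_1 \subset \cdots$, and set $E_s \defeq f^{-1}(X_s)$. Because $f$ is a $G$-fibration, each pair $(E_s, E_{s-1})$ behaves well under excision in Bredon cohomology. The long exact sequences of these pairs assemble into an exact couple, which yields a spectral sequence
\[ E_1^{s,t} = H^{s+t}_G(E_s, E_{s-1}; M) \Longrightarrow H^{s+t}_G(E; M), \]
whose convergence is standard (strong when $X$ is of finite type, with the usual $\lim^1$ caveats otherwise).

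The key analytic step is to identify the $E_1$-page with Bredon cellular cochains on $X$ with coefficients in the local system $\sh^t(f;M)$. The complement $X_s \smallsetminus X_{s-1}$ is a disjoint union of cells of the form $G/H_\alpha \times \text{int}(D^s)$; pick a representative point $x_\alpha \in X^{H_\alpha}$ in each cell. Using the $G$-fibration hypothesis to trivialize $f$ over each cell up to $H_\alpha$-homotopy, excision identifies
\[ H^{s+t}_G(E_s, E_{s-1}; M) \;\cong\; \prod_\alpha H^t_{H_\alpha}\bigl(f^{-1}(x_\alpha);\, M|_{H_\alpha}\bigr). \]
Since $\sh^t(f;M)(H_\alpha, x_\alpha)$ is precisely $H^t_{H_\alpha}(f^{-1}(x_\alpha); M|_{H_\alpha})$, this expresses $E_1^{s,t}$ as the Bredon cellular $s$-cochains on $X$ valued in $\sh^t(f;M)$, in the functor-of-$\fund{X}$ formalism of \autoref{def::eqloc}. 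One then verifies that $d_1$ agrees with the Bredon coboundary by tracking how the attaching maps of $(s+1)$-cells interact with path-lifting in the fibration: those attaching maps produce precisely the morphisms in $\fund{X}$ that encode the $\sh^t(f;M)$ structure. This gives $E_2^{s,t} = H^s_G(X; \sh^t(f;M))$.

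For the multiplicative structure, I would invoke a $G$-equivariant cellular approximation of the diagonal $\Delta \colon X \to X \times X$, chosen so that $\Delta(X_n) \subset \bigcup_{s+s' = n} X_s \times X_{s'}$. Pulling back through $f \times f$ (using that $G$-fibrations are closed under pullback) yields a filtered diagonal on $E$, which in turn induces a filtered pairing
\[ C^*_G(E;M) \otimes C^*_G(E;N) \longrightarrow C^*_G(E; M\otimes N) \]
refining the cup product. The standard machinery then produces a pairing of spectral sequences converging to the cup product on $H^*_G(E; M\otimes N)$. Identifying the induced pairing on $E_2$ with the Bredon cup product on local coefficients reduces to unwinding definitions, and the Koszul sign $(-1)^{s't}$ emerges naturally when reordering tensor factors to match the conventional bidegree.

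The main obstacle is the $E_1$-identification. The local system $\sh^t(f;M)$ carries not merely the $\pi_1(X^H, x)$-action on each fiber $f^{-1}(x)$ seen in the nonequivariant case, but the full structure of a functor on the EI-category $\fund{X}$, including the non-invertible morphisms $\fmap{}$ that change the isotropy subgroup. One must verify that the boundary behavior of cells of different isotropy types produces exactly these non-invertible morphisms, and that $d_1$ coincides with the differential computing $H^s_G(X; -)$ via the $\otimes_{\fund{X}}$ formalism of \autoref{def::eqloc}. This is precisely the place where the subtlety flagged in \autoref{lem::technical} reappears, and where the care required to pass from $\pi_1$ to $\fund{X}$ pays off.
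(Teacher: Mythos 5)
There is nothing in the paper to compare your argument against: the paper does not prove this theorem. It is quoted verbatim as the main result of Moerdijk and Svensson \cite{MS} and used as a black box; the paper's own contributions begin with the Eilenberg-type spectral sequences (\autoref{thm::CE}, \autoref{thm::eqCE}) and \autoref{lem::technical}, which are tools for computing the $E_2$-term of this cited spectral sequence, not for constructing it.

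Measured against the actual source, your route is genuinely different. Moerdijk and Svensson do not filter by skeleta: they identify Bredon cohomology with local coefficients with the cohomology of a small category of equivariant simplices built from $X$, and then obtain the spectral sequence of a $G$-fibration from a Leray/Grothendieck-type spectral sequence for the induced functor between the categories attached to $E$ and $X$, identifying the coefficients $\sh^t(f;M)$ via the fibers. Your skeletal-filtration sketch is the natural equivariant transcription of the classical Serre construction and is plausible in outline, but the points you defer are exactly where the content lies: (i) the identification $H^{s+t}_G(E_s,E_{s-1};M)\cong\prod_\alpha H^t_{H_\alpha}(f^{-1}(x_\alpha);M|_{H_\alpha})$ needs an equivariant fiber-homotopy trivialization over each cell $G/H_\alpha\times D^s$, and the paper's definition of $G$-fibration only requires each $f^H$ to be a fibration, so this compatibility across fixed-point sets must be argued, not assumed; (ii) the verification that $d_1$ is the coboundary for cochains valued in a functor on $\fund{X}$, including the isotropy-changing (non-invertible) morphisms arising from attaching maps, is the heart of the comparison with $H^s_G(X;\sh^t(f;M))$ and is only flagged; and (iii) the claim that the equivariant cellular diagonal induces the standard pairing on $E_2$, with the sign $(-1)^{s't}$, likewise needs the explicit chain-level bookkeeping. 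As a proof these steps would have to be carried out; as a description of an alternative (more geometric, less categorical) construction than the one in \cite{MS}, the outline is reasonable.
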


The tensor product $M\otimes N$ above is a levelwise tensor product,
$(M\otimes N)(G/H) = M(G/H)\otimes N(G/H)$, which is distinct from the box
product $\boxp$ discussed in \autoref{sec::mackey}.

Although we have been using the integer-graded equivariant cohomology
originally defined by Bredon, equivariant cohomology is more naturally
graded on $RO(G)$.  As discussed in \cite[IX]{AK}, for any coefficient
system $M$ which can be extended to a Mackey functor, the Bredon cohomology
theory $H^*_G(-;M)$ can be extended to an $RO(G)$-graded theory.  That is,
for every virtual representation $\omega = V-W$, we have a functor
$H^{\omega}_G(-;M)$; these satisfy appropriate versions of the usual
axioms, including suspension: $\redh^{\omega+V}(\Sigma^V X;M) \cong
\redh^\omega(X;M)$ for any honest representation $V$.  One way of
visualizing this extra data is to say that we have one integer-graded
theory $\{H^{V+n}_G\}_{n\in\bZ}$ for each representation $V$ containing no
trivial subrepresentations.  Each of these theories $H^{V+*}_G(-;M)$ can be
used to define local coefficient systems $\sh^{V+t}(f,M)$.  Kronholm shows
the following in his thesis \cite{bill}.

\begin{thm}[Kronholm]\label{thm::bill}
For each real representation $V$, there is a natural spectral sequence
\[ E_2^{s,t}(M,V) = \h^s(X;\sh^{V+t}(f,M)) \Longrightarrow
\h^{V+s+t}(E;M). \]
Further, for each $V,V'\in RO(G)$, there is a pairing
\[ E_r^{s,t}(M,V) \otimes E_r^{s',t'}(N,V') \to
E_r^{s+s',t+t'}(M\otimes N,V+V')\]
converging to the standard pairing on $E_\infty$ and agreeing 
with the standard pairing on $E_2$.
\end{thm}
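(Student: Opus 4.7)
The plan is to observe that, for each fixed $V \in RO(G)$, the assignment $Y \mapsto \h^{V+*}(Y;M)$ is an ordinary integer-graded reduced cohomology theory on based $G$-spaces. Homotopy invariance, the exact sequence of a cofibration, and the wedge axiom are immediate from the $RO(G)$-graded axioms since $V$ is held constant; the integer suspension isomorphism is the restriction of the general $RO(G)$-graded one. Once this is in hand, the Moerdijk--Svensson construction applies word for word with $\h^*$ replaced by $\h^{V+*}$.

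Concretely, choose a $G$-CW structure on $X$ and pull it back to a filtration $E^{(s)} = f^{-1}(X^{(s)})$ of $E$. The long exact sequences of the pairs $(E^{(s)}, E^{(s-1)})$ in the theory $\h^{V+*}(-;M)$ assemble into an exact couple whose associated spectral sequence converges to $\h^{V+s+t}(E;M)$. Over each orbit-cell $G/H_\alpha \times D^n$ of $X^{(n)}/X^{(n-1)}$, the $G$-fibration $f$ is fiber-homotopy equivalent to a product, so the $E_1$-term is the cellular cochain complex of $X$ with coefficients in the functor
\[ (H,x) \longmapsto H_H^{V+t}(f^{-1}(x); M|_H), \]
which by definition is $\sh^{V+t}(f,M)$. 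Taking $d_1$ cohomology then produces the desired $E_2^{s,t} = \h^s(X; \sh^{V+t}(f,M))$.

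For the multiplicative structure, choose a $G$-cellular diagonal approximation $\Delta\colon X \to X \times X$; this induces a filtration-compatible diagonal on $E$. The external $RO(G)$-graded cup product
\[ \h^{V+*}(E;M) \otimes \h^{V'+*}(E;N) \to \h^{V+V'+*}(E \times E;\, M \otimes N), \]
pulled back along $\Delta$, is the internal cup product, and the filtration compatibility of $\Delta$ induces a pairing of exact couples, hence of spectral sequences, converging to the standard cup product on $E_\infty$. The identification of the $E_2$-pairing with the expected pairing of local coefficient cohomologies (together with the Koszul sign $(-1)^{s't}$) is formal once the $E_1$ pairing is identified with the cellular cup product, by the same bookkeeping as in the integer-graded Moerdijk--Svensson case. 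The main obstacle is verifying that the $RO(G)$-graded cup product is compatible with the skeletal filtration at the cochain level, i.e.\ that a suitable equivariant Alexander--Whitney-type diagonal exists; given this, the rest of the argument runs parallel to \cite{MS}.
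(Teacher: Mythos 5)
The paper does not prove this result; it records it as Kronholm's theorem and cites his thesis \cite{bill} directly, so there is no in-paper argument to measure your sketch against. That said, your outline is a sensible reconstruction of how the result is obtained: fix $V$, observe that $\h^{V+*}(-;M)$ is an integer-graded equivariant cohomology theory, run the Moerdijk--Svensson skeletal filtration of $E$ by $E^{(s)}=f^{-1}(X^{(s)})$, and identify the $E_1$ page with the cellular cochain complex of $X$ valued in the coefficient system $(H,x)\mapsto H^{V+t}_H(f^{-1}(x);M|_H)=\sh^{V+t}(f,M)$.

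One caution about the level of detail you leave unresolved. The step ``the $E_1$-term is the cellular cochain complex of $X$ with coefficients in $\sh^{V+t}(f,M)$'' is precisely the technical core of \cite{MS}. In the equivariant setting the coefficient functor lives on the fundamental EI-category $\fund{X}$, not just the orbit category, and the identification of $d_1$ requires tracking how attaching maps of orbit-cells, via path-lifting in the fibration, realize morphisms of $\fund{X}$. Moerdijk and Svensson handle this through a more combinatorial simplicial/topos-theoretic framework rather than a naive cell-by-cell untwisting, and anyone reproducing Kronholm's argument would need to check that the $RO(G)$-graded suspension isomorphisms interact coherently with this identification (the relevant suspensions are by trivial representations $\bR^s$, so this is benign, but it should be stated). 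Similarly, for the products you correctly flag the genuine obstacle: a filtration-compatible equivariant diagonal approximation. Equivariant Alexander--Whitney maps do not exist in the same naive form as nonequivariantly, and \cite{MS} construct the pairing by other means; your sketch names the issue but does not supply the input that closes it. As a high-level plan your proposal is the right one, but the two points above are where the real work of Kronholm's proof lies, and they are deferred rather than addressed.
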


We have an analogue of \autoref{prop::hzero}, as well.

\begin{prop}\label{prop::eqhzero}
$\h^0(X;\sM) \cong \Hom_{\Pi}(\underline{R},\sM)$, where $\underline{R}$ is
the constant functor.
\end{prop}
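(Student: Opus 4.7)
The plan is to identify $\coh_0(\tilde X)$ with the constant functor $\underline R\colon \Pi\op\to\rmod$, after which the result follows by applying the left-exact contravariant functor $\Hom_\Pi(-,\sM)$ to the canonical right-exact sequence $C_1^G(\tilde X) \to C_0^G(\tilde X) \to \coh_0(\tilde X) \to 0$. Equivalently, one may appeal to the Eilenberg spectral sequence of \autoref{thm::eqCE}, in which only $E_2^{0,0} = \Hom_\Pi(\coh_0(\tilde X),\sM)$ contributes in total degree zero; all other bidegrees either have $p < 0$ or $q < 0$ and therefore vanish.

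The main step, then, is the identification. By construction, each value $\tilde X(H,x) = \widetilde{X^H}(x)$ is the space of homotopy classes rel endpoints of paths in $X^H$ starting at $x$, and in particular is path-connected, so $H_0(\tilde X(H,x)) \cong R$. For a morphism $(\alpha,[\gamma])\colon (H,x)\to(K,y)$, the induced map $\tilde X(K,y)\to \tilde X(H,x)$ sending $[\beta]$ to $(\alpha^*\beta)\cdot\gamma$ is a continuous map between path-connected spaces and hence induces the identity on $H_0 \cong R$. Thus $\coh_0(\tilde X) \cong \underline R$ as functors on $\Pi\op$.

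The only subtle point is the basepoint tracking in the preceding paragraph: the canonical basepoint $c_{K,y}$ of $\tilde X(K,y)$ is \emph{not} carried to $c_{H,x}$, but rather to the class $[\gamma]$, which is a different point of $\tilde X(H,x)$. Path-connectedness collapses this distinction on $H_0$, so the induced map is genuinely the identity of $R$; no sign or nontrivial automorphism of $R$ intervenes. Combining this identification with the argument of the first paragraph yields $\h^0(X;\sM) \cong \Hom_\Pi(\underline R,\sM)$, as claimed.
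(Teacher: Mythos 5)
Your proof is correct and follows the same route as the paper: the paper's own argument also identifies $\coh_0(\tilde X)\cong\underline R$ (each $\tilde X(H,x)$ being path-connected) and then invokes the left exactness of $\Hom_\Pi(-,\sM)$, exactly as in the nonequivariant \autoref{prop::hzero}. Your extra care with the basepoint issue and the spectral-sequence alternative are fine but not a different method.
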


\begin{proof}
As in \autoref{prop::hzero}, this comes from identifying $\coh_0(\tilde{X})
\cong \underline{R}$ and from the left exactness of $\Hom_\Pi$.
\end{proof}

\begin{prop}\label{prop::gconnected}
Suppose that $X$ is $G$-connected, in the sense that each $X^H$ is nonempty
and connected, and let $\bullet \in X^G$.  Then
$\Hom_{\Pi}(\underline{R},\sM)$ is isomorphic to a sub-$R$-module of
$\sM(G,\bullet)$.
\end{prop}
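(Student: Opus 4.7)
The plan is to define the evaluation map
\[ \mathrm{ev}\colon \Hom_{\Pi}(\underline{R},\sM) \to \sM(G,\bullet),\qquad \eta \mapsto \eta_{(G,\bullet)}(1), \]
and prove that it is $R$-linear and injective; then $\Hom_\Pi(\underline{R},\sM)$ is isomorphic to its image, a sub-$R$-module of $\sM(G,\bullet)$, as claimed.

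First I would unpack a natural transformation $\eta\colon \underline{R} \to \sM$ as an assignment of an element $m_{(H,x)} \defeq \eta_{(H,x)}(1) \in \sM(H,x)$ to each object $(H,x)$ of $\Pi$, subject to the compatibility $\sM(f)(m_{(K,y)}) = m_{(H,x)}$ for every morphism $f\colon (H,x) \to (K,y)$ in $\Pi$. This follows because $\underline{R}$ sends every morphism to the identity of $R$ while $\sM$ is contravariant, so naturality collapses to this single equation at each arrow.

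Second, I would use the $G$-connectedness hypothesis to produce, for each object $(H,x)$, a morphism $f\colon (H,x) \to (G,\bullet)$ in $\Pi$. Such a morphism is specified by the unique quotient $\alpha\colon G/H \to G/G$ together with a homotopy class of paths in $X^H$ from $x$ to $\alpha^*(\bullet) = \bullet$. The inclusion $X^G \subseteq X^H$ places $\bullet$ in $X^H$, and since $X^H$ is nonempty and path-connected, such a homotopy class exists. Substituting $f$ into the naturality condition yields $m_{(H,x)} = \sM(f)(m_{(G,\bullet)})$, so the single element $m_{(G,\bullet)} = \mathrm{ev}(\eta)$ determines the entire family $\{m_{(H,x)}\}$; in particular, $\mathrm{ev}(\eta) = 0$ forces $\eta = 0$, giving injectivity.

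There is no substantive obstacle here; the main conceptual point is recognizing that $(G,\bullet)$ is weakly terminal in $\Pi$ under the $G$-connectedness hypothesis, in the sense that there exists at least one morphism into it from every object (though typically many, since the path class is not unique). This weak terminality is exactly strong enough to force injectivity of $\mathrm{ev}$ but not surjectivity — indeed, the image will in general be a proper submodule, consisting of those $m \in \sM(G,\bullet)$ for which $\sM(f)(m) \in \sM(H,x)$ is independent of the choice of $f\colon (H,x)\to(G,\bullet)$. Since the proposition only asserts isomorphism with \emph{some} submodule, this suffices.
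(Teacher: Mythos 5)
Your proof is correct and follows essentially the same route as the paper: both rest on the observation that $G$-connectedness makes $(G,\bullet)$ weakly terminal in $\Pi$ (a morphism $(H,x)\to(G,\bullet)$ exists because $\bullet\in X^G\subseteq X^H$ and $X^H$ is connected), so that a natural transformation $\underline{R}\to\sM$ is determined by its component at $(G,\bullet)$, making evaluation there an injective $R$-linear map onto a submodule of $\sM(G,\bullet)$. Your write-up just makes explicit the naturality bookkeeping and the (correct) caveat that the evaluation map need not be surjective, which the paper leaves implicit.
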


\begin{proof}
Since $X$ is $G$-connected, $(G,\bullet)$ is a weakly terminal object in
$\fund{X}$, i.e.~for every $(K,y)$ there is a map $(K,y)\to (G,\bullet)$.
It follows that an element of $\Hom_{\Pi}(\underline{R},\sM)$ is determined
by the map of $R$-modules $R\to \sM(G,\bullet)$.
\end{proof}

\section{Mackey functor valued cohomology theories}\label{sec::mackey}

Preparatory to the computation in \autoref{sec::computations}, we will
review the theory of Mackey functor valued cohomology theories in this
section, and the calculation by Lewis \cite{lgl} of $H^*_G(\cpv{V})$ in
\autoref{sec::cpv}.  Lewis considers cohomology which is not only
$RO(G)$-graded but Mackey functor valued.  He takes coefficients in the
Burnside ring Mackey functor $A = A_G$.  Fix a finite group $G$.

\begin{defn}\label{def::burnsidestable}
The \defword{Burnside category} $\sB_G$ is the full subcategory of the
equivariant stable category on objects $\Sigma^\infty (\gset{b}_+)$, where
$\gset{b}$ is a finite $G$-set.  Explicitly, the objects of $\sB_G$ are
finite $G$-sets and the morphisms are the stable $G$-maps.
\end{defn}

\begin{defn} A \defword{Mackey functor} is a contravariant additive functor
from the Burnside category $\sB_G$ to the category $\rmod$.
\end{defn}

For obvious reasons, $\sB_G$ is sometimes called the ``stable orbit
category'' and Mackey functors ``stable coefficient systems.''  However, it
is frequently more convenient to work with the following combinatorial
definition.  It is shown in \cite{AK} that the two definitions are
equivalent for finite $G$.

\begin{defn}\label{def::preburnside}
The category $\sB_G^+$ is the category having:
\begin{itemize}
\item objects: the finite $G$-sets
\item morphisms: equivalence classes of spans \[ \myspan{b}uc \] 
with composition given by pullbacks.
\end{itemize}
\end{defn}

Two spans $\inlinespan{b}uc$ and $\inlinespan{b}vc$ are equivalent if there
is a commutative diagram as follows:
\[ \xymatrix@R=0.5pc@C=1pc{
  & \gset{u} \ar[dr] \ar[dl] \ar[dd]^{\cong} & \\
  \gset{b} & & \gset{c} \\
  & \gset{v} \ar[ur] \ar[ul] & \\
}\]
Each hom set $\sB_G^+(\gset{b},\gset{c})$ has the structure of an abelian
monoid.  We may take the sum of $\inlinespan{b}uc$ and $\inlinespan{b}vc$
to be the span
\[ \myspan{b}{u\amalg v}c \]
We may thus apply the Grothendieck construction to the hom sets of
$\sB_G^+$.

\begin{defn}\label{def::burnside}
The \defword{Burnside category} $\sB_G$ is the category enriched over
$\ab$ having:
\begin{itemize}
\item objects: the objects of $\sB_G^+$
\item morphisms: $\sB_G(\gset{b},\gset{c})$ is the Grothendieck group of
$\sB_G^+(\gset{b},\gset{c})$.
\end{itemize}
\end{defn}

Since Mackey functors are additive functors, a Mackey functor $M$ over a
commutative ring $R$ is determined by its values on the orbits $G/H$.

\begin{defn}
The \defword{Burnside ring Mackey functor} $A_G$, abbreviated $A$ when the
group $G$ is implicit, is the represented functor $R\otimes\sB_G(-,G/G)$.
\end{defn}

The Burnside ring Mackey functor is so named because, when $R=\bZ$, its
value $A_G(G/H)$ at the orbit $G/H$ is the underlying Abelian group of the
classical Burnside ring $A(H)$.  In fact, the connection extends to the
ring structure as well.  The Day tensor product gives a monoidal structure
$\boxp$ on $\sB_G$, with unit $A$.  Explicitly, $\boxp$ is a left Kan
extension.  Given Mackey functors $M,N\colon \sB_G\to \rmod$, we can form
the external product 
\[ M\oboxp N \colon \sB_G\times\sB_G\to \rmod \colon
(\gset{b},\gset{c}) \longmapsto M(\gset{b})\otimes
N(\gset{c}), \]
and $M\boxp N$ is the left Kan extension of $M\oboxp  N$ along the
Cartesian product functor
$\times\colon \sB_G\times\sB_G \to \sB_G$:
\[ \xymatrix@C=3pc{
  \sB_G\times\sB_G \ar^{M\oboxp N}[r] \ar_{\times}[d] & \rmod \\
  \sB_G \ar@{.>}_{M\boxp N}[ur] \\
}\]
In other words, natural transformations from $M\boxp N$ to another Mackey
functor $P$ are the same as natural transformations from $M\oboxp N$ to
$P\circ\times$.
The upshot of this is that we have a notion of monoids in $\sB_G$, namely
Mackey functors $T$ together with a ``product map'' $T\boxp T\to T$
and a unit map $A\to T$ satisfying the usual diagrams.  These monoids are
known as \defword{Mackey functor rings} or \defword{Green functors}.  For
any Green functor $T$, the fact that $\boxp$ is a left Kan extension
implies that $T\boxp T\to T$ gives each $T(G/H)$ the structure of an
$R$-algebra.  The Burnside ring Mackey functor $A$ is, of course, a Green
functor, and the ring structure on each $A(G/H)$ agrees with the ring
structure on the classical Burnside ring $A(H)$.

The (unstable) orbit category embeds contravariantly and covariantly in
$\sB_G$.  Both embeddings are the identity on objects; a map of $G$-sets
$G/H\xrightarrow{\alpha} G/K$ is sent to either 
\[ \contraspan{G/H}{G/K}{\alpha} \hspace{10pt}\text{ or }\hspace{10pt} \cospan{G/H}{G/K}{\alpha} \]
as appropriate.  Furthermore, every morphism in $\sB_G$ can be written as
a composite of such morphisms:
\[ \xymatrix@R=0.5pc@C=1pc{
  & \gset{u} \ar[dl] \ar[dr] & \\
  \gset{b} & & \gset{c}\\}
 \hspace{10pt}=\hspace{10pt}
 \xymatrix@R=0.5pc@C=1pc{
  & \gset{u} \ar[dl] \ar@{=}[dr] & & \gset{u} \ar@{=}[dl] \ar[dr] & \\
  \gset{b} & & \gset{u} & & \gset{c}\\
}\]
Hence a Mackey functor $M$ determines and is determined by a pair of
contravariant and covariant coefficient systems which agree on objects (and
which satisfy certain compatibility diagrams encoding the composition in
$\sB_G$). As already mentioned, equivariant homology and cohomology
theories whose gradings can be extended to $RO(G)$ have coefficient systems
which extend to Mackey functors.

In particular, we can consider $RO(G)$-graded Bredon cohomology with
coefficients in the Burnside ring Mackey functor $A$; since $A$ is the unit
for $\boxp$, this is the natural choice of coefficients to consider.   In
what follows, coefficients in $A$ will be implicit and
omitted from the notation.  The $\rmod$ valued theory $\h^*$ can be
extended to a Mackey functor valued theory $\ulh^*$ as follows.  On
orbits, $\ulh^*(X)(G/K) := \h^*(G\times_K X) \cong \h^*((G/K) \times X)
\cong H_K^*(X)$.  On morphisms of type
\[\cospan{\gset{b}}{\gset{c}}{}\]
the required map $\ulh^*(X)(\gset{c})\to\ulh^*(X)(\gset{b})$ is induced on
cohomology by the map of spaces $\gset{b}\times X \to \gset{c}\times X$.
On morphisms of type 
\[\contraspan{\gset{b}}{\gset{c}}{}\]
the map $\ulh^*(X)(\gset{b})\to\ulh^*(X)(\gset{c})$ is induced by an
appropriate transfer map $\Sigma^V\gset{c_+} \to \Sigma^V\gset{b_+}$ and
the suspension isomorphism on cohomology.  Hence $\ulh^*$ ties together
information about the equivariant theories $H_K^*$ for all subgroups $K<G$.

\section{Equivariant classifying spaces}\label{sec::eqclassifying}

For the remainder of this paper, with the exception of classical structure
groups, groups named with Greek letters will be viewed as structure groups
and those named with Latin letters will be viewed as ambient groups of
equivariance.  Suppose that we are given a structure group $\Gamma$ and a
group of equivariance $G$.  Then, as discussed in \cite{AK} and many other
places, there is a notion of a principal $(G,\Gamma)$-bundle, namely a
projection to $\Gamma$-orbits $E\to B=E/\Gamma$ of a $\Gamma$-free
$(G\times\Gamma)$-space $E$.  Such equivariant bundles are classified by
universal principal bundles $E_G\Gamma\to B_G\Gamma$, where $E_G\Gamma$ is a space
whose fixed point sets $(E_G\Gamma)^\Lambda$ are empty when $\Lambda \subset
G\times\Gamma$ intersects $\Gamma = \triv\times \Gamma$ nontrivially and
contractible when $\Lambda \cap \Gamma = \triv$. 

As should be expected, for a fixed group $G$, the equivariant classifying
space construction can be made functorial; that is, there are functors
\begin{align*}
E_G &\colon \gset{Grp} \to \gtop\\
B_G &\colon \gset{Grp} \to \gtop.
\end{align*}
It will be helpful to pick particular functors $E_G$ and $B_G$, using the
categorical two-sided bar construction.  Given any groups $G$ and $\Gamma$, we
may take 
\begin{align*}
E_G \Gamma &:= B(T_\Gamma,\sO_{G\times\Gamma},O_{G\times\Gamma}) \\
B_G \Gamma &:= (E_G\Gamma)/\Gamma
\end{align*}
where $\sO_{G\times\Gamma}$ is the orbit category,
$O_{G\times\Gamma}\colon \sO_{G\times\Gamma}\to\spaces$ is given by viewing an
orbit $(G\times\Gamma)/\Lambda$ as a topological space, and $T_\Gamma\colon
\sO_{G\times\Gamma}\op \to\spaces$ is the functor which takes 
\[(G\times\Gamma)/\Lambda \longmapsto 
\begin{cases}
\bullet & \text{if }\Lambda\cap\Gamma=\triv\\
\emptyset & \text{otherwise}
\end{cases}\]
Since the functor $O_{G\times\Gamma}$ lands in $(G\times\Gamma)\textbf{-Top}$,
$E_G\Gamma$ is a $(G\times\Gamma)$-space, and it is easy to check that it has the
correct fixed points.

The bar construction $B(-,-,-)$ is a functor from the category of triples
$(T,\sC,S)$ to $\spaces$.  Here $\sC$ is a category and $S,T$ are
respectively a covariant and a contravariant functor $\sC\to\spaces$.  A
morphism $(T_1,\sC_1,S_1)\to (T_2,\sC_2,S_2)$ in the category of triples
consists of a functor $F\colon \sC_1\to\sC_2$ together with natural
transformations $S_1 \to S_2\circ F$ and $T_1\to T_2\circ F\op$.  It
follows that, for fixed $G$, we can make $E_G(-)$ into a functor
$\textbf{Grp}\to\spaces$ as follows.  Given a homomorphism $\varphi\colon
\Gamma_1\to \Gamma_2$, we apply $B(-,-,-)$ to the morphism of triples given by
the functor
\[ F \colon \sO_{G\times\Gamma_1} \to \sO_{G\times\Gamma_2} \colon (G\times \Gamma_1)/\Lambda
\longmapsto (G\times \Gamma_2)/{\left( (\id\times\varphi)(\Lambda) \right)},\]
with the obvious natural transformations $O_{G\times\Gamma_1} \to
O_{G\times\Gamma_2}\circ F$ and $T_{\Gamma_1}\to T_{\Gamma_2}\circ F\op$ (for the
latter, note that if $\Lambda\cap\Gamma_1 = \triv$, then also
$(\id\times\varphi)(\Lambda)\cap\Gamma_2 = \triv$).  Since the morphism
$E_G\Gamma_1 \to E_G\Gamma_2$ induced by $\Gamma_1\to\Gamma_2$ is $\Gamma_1$-equivariant,
there is an induced map $B_G\Gamma_1\to B_G\Gamma_2$, making $B_G(-)$ a functor.

The following result will be useful later.

\begin{prop}\label{prop::pullback}
Fix a group $G$.  Corresponding to any short exact sequence of structure
groups
\[ 1\to \Gamma \xrightarrow{\varphi} \Upsilon\xrightarrow{\psi} \Sigma \to 1 \]
there is a pullback square in the category of $G$-spaces
\[ \xymatrix@C=3pc{
  B_G \Gamma \ar[r] \ar_{/\Sigma}[d] \pullbackcorner
  & E_G \Sigma \ar^{/\Sigma\phantom{=/\Upsilon}}[d] \\
  B_G \Upsilon \ar[r]^{B_G\psi} & B_G \Sigma \\
}\]
\end{prop}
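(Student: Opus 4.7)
My plan is to exploit the universal property of equivariant classifying spaces for principal bundles. Since $E_G\Sigma \to B_G\Sigma$ is the universal principal $(G,\Sigma)$-bundle, its pullback $P := B_G\Upsilon \times_{B_G\Sigma} E_G\Sigma$ along $B_G\psi$ automatically carries the structure of a principal $(G,\Sigma)$-bundle over $B_G\Upsilon$. It therefore suffices to produce a $G$-homeomorphism $B_G\Gamma \to P$ compatible with the maps in the square, because the left vertical map will then be recognizable as the principal $\Sigma$-bundle projection.

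The central observation is that $E_G\Upsilon$, when restricted along $\id_G \times \varphi\colon G\times\Gamma \hookrightarrow G\times\Upsilon$ to a $(G\times\Gamma)$-space, already satisfies the defining universal property of $E_G\Gamma$. Indeed, for any subgroup $\Lambda \subseteq G\times\Gamma$, injectivity of $\varphi$ gives $(\id_G\times\varphi)(\Lambda)\cap\Upsilon = (\id_G\times\varphi)(\Lambda\cap\Gamma)$, so one intersection equals $\triv$ if and only if the other does, and the emptiness/contractibility rule governing the fixed points of $E_G\Upsilon$ matches that of $E_G\Gamma$ exactly. Consequently $E_G\Upsilon/\Gamma$ serves as a model for $B_G\Gamma$, and it inherits a free, $G$-equivariant residual action of $\Sigma = \Upsilon/\Gamma$ making $E_G\Upsilon/\Gamma \to B_G\Upsilon$ a principal $(G,\Sigma)$-bundle. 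Functoriality of $E_G$ applied to $\psi$ provides a canonical map $E_G\Upsilon \to E_G\Sigma$ which is $\Upsilon$-equivariant for the $\Upsilon$-action on $\Sigma$ via $\psi$; since $\Gamma = \ker\psi$, this factors through $E_G\Upsilon/\Gamma \to E_G\Sigma$ covering $B_G\psi$, so the induced map $E_G\Upsilon/\Gamma \to P$ is a map of principal $(G,\Sigma)$-bundles over $B_G\Upsilon$ and is therefore an isomorphism on fibers, hence a $G$-homeomorphism.

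The main obstacle is upgrading the identification $B_G\Gamma \simeq E_G\Upsilon/\Gamma$ from a weak equivalence to a strict homeomorphism in the specific bar-construction model, so that the claimed pullback square is literal in the category of $G$-spaces. For this I would compare the two bar constructions simplicially via the morphism of triples $(T_\Gamma,\sO_{G\times\Gamma},O_{G\times\Gamma}) \to (T_\Upsilon,\sO_{G\times\Upsilon},O_{G\times\Upsilon})$ induced by $\id\times\varphi$; at each simplicial degree the short exact sequence of groups lets one identify the relevant orbit decompositions directly, and passing to geometric realization then produces the strict homeomorphism $B_G\Gamma \xrightarrow{\cong} E_G\Upsilon/\Gamma$. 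Combined with the canonical classifying map to $E_G\Sigma$ built from functoriality of $E_G$, this yields the asserted pullback square, the commutativity of which is immediate from the compositional relation $\psi\varphi = 1$ at the level of the underlying triples.
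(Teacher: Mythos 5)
Your proposal is correct and its core is the same as the paper's: both arguments ultimately exhibit $(E_G\Upsilon)/\Gamma$ as the upper-left corner by factoring both $E_G\Upsilon \to B_G\Upsilon$ and $E_G\psi\colon E_G\Upsilon\to E_G\Sigma$ through the $\Gamma$-quotient, and then recognize the resulting square as a pullback. The paper does this more directly: it writes the commutative square, silently asserts $(E_G\Upsilon)/\Gamma\cong B_G\Gamma$, and notes that the horizontal maps induce homeomorphisms on the fibers of the vertical maps, which is what makes the square Cartesian. You route through the universal property instead, forming the pullback $P := B_G\Upsilon\times_{B_G\Sigma}E_G\Sigma$ and showing that $(E_G\Upsilon)/\Gamma\to P$ is a map of principal $(G,\Sigma)$-bundles over a common base, hence a $G$-homeomorphism; that is a slightly longer but morally identical recognition argument. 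What you supply that the paper omits is a justification of the identification $(E_G\Upsilon)/\Gamma\simeq B_G\Gamma$: your fixed-point computation, using $(\id_G\times\varphi)(\Lambda)\cap\Upsilon = (\id_G\times\varphi)(\Lambda\cap\Gamma)$ for $\Lambda\subseteq G\times\Gamma$, correctly verifies that $E_G\Upsilon$ restricted along $\id\times\varphi$ is a model for $E_G\Gamma$.

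You are also right to flag the tension between "a model for $B_G\Gamma$" and "the specific bar-construction functor $B_G\Gamma$," since the proposition as stated uses the fixed functorial model. The paper does not engage with this at all, and in its actual application it freely substitutes yet other models ($S^\infty$, $\bR P^\infty$, $\cpv{\cxuniverse}$), so the proposition is really being used up to a choice of model. However, your sketched repair — a degreewise comparison of the two bar constructions — is too optimistic as written: the bar constructions are over different orbit categories $\sO_{G\times\Gamma}$ and $\sO_{G\times\Upsilon}$, and the short exact sequence does not give an identification of simplicial degrees. If you want a strict statement, the cleaner move is the one the paper implicitly makes: state the proposition with $(E_G\Upsilon)/\Gamma$ in the corner, and record separately that this space is a valid model for $B_G\Gamma$.
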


\begin{proof}
By functoriality of $E_G(-)$ and the definition of $B_G(-)$, the map $\psi$
induces a commutative diagram
\[ \xymatrix@C=3pc{
  E_G \Upsilon \ar[r]^{E_G\psi} \ar[d]_{/\Upsilon} 
  & E_G \Sigma \ar[d]^{/\Upsilon = /\Sigma} \\
  B_G \Upsilon \ar[r]^{B_G\psi} & B_G\Sigma \\
}\]
Further, the projection $E_G \Upsilon\to B_G \Upsilon = (E_G \Upsilon)/\Upsilon$
factors as 
\[ E_G \Upsilon \to (E_G \Upsilon)/\Gamma \xrightarrow{/\Sigma} (E_G \Upsilon)/\Upsilon \]
and since the action of $\Gamma$ on $E_G \Sigma$ is trivial, $E_G \Upsilon\to
E_G \Sigma$ also factors through $(E_G \Upsilon)/\Gamma \cong  B_G \Gamma$.  We
thus get the commutative square described in the proposition.  Viewing
$E_G\Sigma$ as a $G$-space via the inclusion $G\hookrightarrow
G\times\Sigma$, this is a diagram in the category of $G$-spaces.  The
square induces a homeomorphism on the fibers of the vertical maps, and
hence is a pullback in the category of $G$-spaces, as desired.
\end{proof}

\section{The cohomology of complex projective spaces}\label{sec::cpv}

\renewcommand{\G}{{C_{\!p}}}

Now specialize to the case $G = \G$, the cyclic group of order $p$, for
some prime $p>2$.  In \cite{lgl}, Lewis shows that certain $\G$-spaces,
including the complex projective spaces $\cpv{V}$ on an arbitrary
$\G$-representation $V$, have cohomology which is additively free as a
module over the cohomology of a point $\ulh^* :=
\ulh^*(\bullet)$.\footnote{\cite{lgl} also covers the case $p=2$; the
details are actually easier but slightly different.} The cohomology of a
point is a Green functor, with monoid structure induced by the diagonal map
$\bullet\to\bullet\times\bullet$, so it makes sense to talk about modules
over it.  As usual, such a module is called free if it is of the form
$\ulh^* \boxp \sB_\G(-,\gset{b})$, i.e.~a box product of $\ulh^*$ and a
representable Mackey functor.  Further, for the complex projective spaces
$\cpv{V}$, Lewis describes the product structure \[\ulh^*(\cpv{V}) \boxp
\ulh^*(\cpv{V}) \to \ulh^*(\cpv{V})\] which we will now summarize in one
case of interest.\footnote{This paper is part of the author's Ph.D. thesis,
which discusses Lewis's calculation in more detail and correct several
errors.}

Consider the complete universe $\cxuniverse$, a direct sum of countably
infinitely many copies of each irreducible complex representation of $\G$.
$\cpv{\cxuniverse}$ is a space of interest because, as will be discussed in
\autoref{sec::computations}, it is a model for the equivariant
classifying space $B_\G SO(2)$ and hence is related to equivariant
characteristic classes.  We can describe $\cxuniverse$ explicitly:
the irreducibles all have complex dimension one, and a generator $g$ of
$\G$ acts by rotation through $2\pi j/p$ radians in the complex plane, for
some integer $j$.  Hence there are $p$ irreducible representations
$\phi_0,\phi_1,\ldots,\phi_{p-1}$, with $\phi_0$ trivial and $\phi_1$
through $\phi_{p-1}$ nontrivial.  For convenience we may assume that
$\phi_j = \phi_1^j$ (i.e.~$\phi_1^{\otimes j}$) for each $j$.  Since $\phi_1^p =
\phi_0$, it then also makes sense to write $\phi_j = \phi_1^j$ for $j>p$.
We have $\phi_j = \phi_{j+p} = \phi_{j+2p} = \cdots$, and we can write
\[ \sU = \phi_0 + \phi_1 + \cdots + \phi_{p-1} + \phi_p + \cdots.\]

The dimensions of the generators of $\ulh^*(\cpv{\cxuniverse})$ are related to the
underlying real representations of the $\phi_j$; it is worth giving these
dimensions their own names.

\begin{defn}\label{def::omegas}
Let $\omega_0$ be the zero-dimensional representation $0$.  For each
positive integer $j$, define $\omega_j$ to be the underlying real
representation of
\[ \phi_j^{-1} \left( \phi_0 + \phi_1 + \cdots + \phi_{j-1} \right) \]
\end{defn}

Observe that $\omega_p$ is the underlying real representation of the
regular complex representation $\lambda$.  More generally, if $j = rp+j_0$
and $0 \le j_0 < p$, then $\omega_j$ is the underlying real representation
of $r\lambda \oplus \phi_{p-j_0} \oplus \cdots \oplus \phi_{p-1}$.

\begin{thm}[Lewis]\label{thm::addcpv}
As an $RO(G)$-graded module over $\ulh^*$,
\[ \ulh^*(\cpv{\cxuniverse}) \cong \bigoplus_{j\ge 0} \Sigma^{\omega_j}\!\!\left(
\ulh^* \boxp \sB_\G(-,\G/\G) \right) \cong \bigoplus_{j\ge 0}
\Sigma^{\omega_j} \ulh^* \]
where the $\omega_j$ are as given in \autoref{def::omegas}.
\end{thm}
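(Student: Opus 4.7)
The plan is to induct over a cellular filtration.  First, I would enumerate the irreducible complex summands of $\cxuniverse$ so that each $\phi_k$ ($0\le k<p$) appears cofinally, producing a flag $V_0\subset V_1\subset\cdots\subset\cxuniverse$ of complex $G$-subspaces with $\dim_{\bC} V_j=j+1$, $V_j=V_{j-1}\oplus\phi_j$ (subscripts reduced modulo $p$), and $\cpv{\cxuniverse}=\colim_j\cpv{V_j}$.  The key geometric input is standard: if $V=V'\oplus L$ with $L$ a complex line, the inclusion $\cpv{V'}\hookrightarrow\cpv{V}$ is a $G$-cofibration whose quotient is $G$-homeomorphic to the representation sphere $S^W$ with $W$ the underlying real representation of $L^{*}\otimes_\bC V'$; this comes from identifying $\cpv{V}\setminus\cpv{V'}$ with the $G$-module $L^{*}\otimes V'$ via the graph-of-a-linear-map construction.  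With the chosen flag, the $j$-th cofiber is exactly $S^{\omega_j}$ in the notation of \autoref{def::omegas}.

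Next I would induct on $j$ to prove $\ulh^*(\cpv{V_j})\cong\bigoplus_{i=0}^j\Sigma^{\omega_i}\ulh^*$; the case $j=0$ is immediate.  For the inductive step I would apply $\ulh^*$ to the cofiber sequence $\cpv{V_{j-1}}\to\cpv{V_j}\to S^{\omega_j}$ and use the suspension isomorphism to identify the reduced cohomology of $S^{\omega_j}$ with $\Sigma^{\omega_j}\ulh^*$.  It then suffices to exhibit a class $x_j\in\ulh^{\omega_j}(\cpv{V_j})$ whose restriction to $S^{\omega_j}$ is the Thom generator: such an $x_j$ splits the collapse map cohomologically, so the long exact sequence breaks into a short exact sequence which in turn splits by the inductive freeness hypothesis.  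Passage to the colimit then preserves the conclusion because each transition $\ulh^*(\cpv{V_j})\to\ulh^*(\cpv{V_{j-1}})$ is a split surjection, so the relevant $\lim{}^1$ vanishes and one can assemble the sum degree-by-degree.

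The hard part will be producing the classes $x_j$, equivalently showing that the connecting Mackey-functor homomorphism $\ulh^*(\cpv{V_{j-1}})\to\Sigma^{\omega_j+1}\ulh^*$ vanishes.  The natural candidate uses equivariant Euler classes of line bundles built from the tautological bundle $\gamma_j$ on $\cpv{V_j}$: the Thom class of the normal bundle $\gamma_{j-1}^{*}\otimes\phi_j$ to $\cpv{V_{j-1}}$ should lift to a product of equivariant Euler classes of twists of $\gamma_j$ by the irreducible summands of $V_{j-1}$, which assembles into an element of $RO(G)$-degree $\omega_j$ with the right restriction.  Checking nonvanishing of these Euler classes in Mackey-functor valued cohomology and that their product really hits the Thom generator is the heart of Lewis's calculation; it requires a careful fixed-point analysis — on each $\cpv{V_j}^H$ the claim largely reduces to the classical statement for $\bC P^n$ — together with verifying compatibility under the transfers that make up the Mackey structure.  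A cleaner alternative, assuming one has established an equivariant projective bundle theorem for $\ulh^*$, would bypass the explicit Euler class construction entirely.
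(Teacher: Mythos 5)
You should first note that the paper does not prove this statement at all: it is quoted from Lewis \cite{lgl}, so your proposal has to be measured against Lewis's argument rather than anything in the text above. Your geometric skeleton is the right one, and it is essentially Lewis's: filter $\cpv{\cxuniverse}$ by a flag adding one irreducible line at a time in the order $\phi_0,\phi_1,\dots,\phi_{p-1},\phi_0,\dots$, identify each cofiber $\cpv{V_j}/\cpv{V_{j-1}}$ with the representation sphere $S^{\omega_j}$ of \autoref{def::omegas} via the graph-of-a-linear-map construction, induct up the filtration, and pass to the colimit using split surjectivity of the restrictions to kill the $\lim^1$ term. All of that is correct and matches the shape of the cited calculation.

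The genuine gap is the step you yourself label ``the hard part'': you never establish that the connecting homomorphisms vanish, i.e.\ that each restriction $\ulh^*(\cpv{V_j})\to\ulh^*(\cpv{V_{j-1}})$ is surjective. You name a candidate mechanism (equivariant Euler classes of twists of the tautological bundle lifting the Thom class) and then defer exactly the verification that constitutes the theorem. Moreover, this is not how Lewis closes the argument for the additive statement: his freeness theorem deduces the vanishing of the boundary maps from explicit vanishing ranges in the $RO(\G)$-graded cohomology of a point, applied to the degrees in which the obstructions live for the dimensions $\omega_j$ produced by this filtration; this is the role of the vanishing result the present paper alludes to when it remarks that \cite[Corollary 2.7]{lgl} is ``crucial'' and fails when $q=p$. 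That remark also shows the missing step is not formal: with $\fp$ coefficients the same flag, the same cofiber identification, and the same long exact sequences are available, yet the conclusion is false, so some genuinely computational input about $\ulh^*$ of a point (or a completed Euler-class argument including the fixed-point and transfer checks you list but do not perform) must be supplied before the induction goes through. As written, your proposal is a sound outline with the decisive step asserted rather than proved.
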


Label the generators in dimensions $\omega_1$ through $\omega_{p-1}$ as
$D_1$ through $D_{p-1}$, and let $C$ be the generator in dimension
$\omega_p$.  It is shown in \cite{lgl} that these elements generate
$\ulh^*(\cpv{\cxuniverse})$ as an algebra over $\ulh^*$.

Explicitly, $C$ is a map of $\ulh^*$-modules $\Sigma^{\omega_p} (\ulh^*
\boxp \sB_\G(-,\G/\G)) \to \ulh^*(\cpv{\cxuniverse})$, which by Yoneda we may
identify with an element of
\[ \ulh^{\omega_p}(\cpv{\cxuniverse})(\G/\G) \cong \h^{\omega_p}(\cpv{\cxuniverse}), \]
and similarly for the $D_j$.
To describe the product structure on the graded Mackey functor
$\ulh^*(\cpv{\cxuniverse})$, it suffices to describe the levelwise products
\[ \ulh^*(\cpv{\cxuniverse})(\G/H) \otimes \ulh^*(\cpv{\cxuniverse})(\G/H) 
\to \ulh^*(\cpv{\cxuniverse})(\G/H) \]
for each $H<\G$.  However, $\G$ has only two subgroups.  If $H = \triv$,
then $\ulh^*(\cpv{\cxuniverse})(\G/e) \cong H^{|*|}(\bC P^\infty)$ with the expected
product structure.   If $H=\G$, we get $RO(\G)$-graded Bredon
cohomology, where $C$ and the $D_j$ live.

It remains to describe the products of the $D_j$ and $C$.  For notational
convenience, let $D_0 = 1$.

\begin{thm}[Lewis]\label{thm::multcpv}
As a commutative algebra over $\ulh^*$, $\ulh^*(\cpv{\cxuniverse})$ is generated by
elements $C$ in dimension $\omega_p$ and $D_j$ in dimension $\omega_j$ for
each $1\le j\le p-1$.  $C$ generates a polynomial subalgebra of
$\h^*(\cpv{\cxuniverse})$, and a complete set of additive generators of
$\ulh^*(\cpv{\cxuniverse})$ is given by the elements $\{ D_j C^n \}_{0\le
j\le p-1,\ n\ge 0}$.  
If $j\le k \le p-1$, then each product $D_j D_k = D_k D_j$ is given by a linear
combination over $\ulh^*$ of the elements $D_j$ through $D_{j+k}$, if
$j+k\le p-1$, and of $D_j$ through $D_{p-1}$ and $D_0 C = C$ through
$D_{j+k-p} C$ if $j+k\ge p$.
\end{thm}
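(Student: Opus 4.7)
The plan is to bootstrap from the additive theorem, \autoref{thm::addcpv}, using the $RO(\G)$-grading to rule out all but finitely many terms in each product $D_j D_k$.

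First I would identify $\{D_j C^n\}_{0\le j\le p-1,\ n\ge 0}$ as a complete free $\ulh^*$-basis of $\ulh^*(\cpv{\cxuniverse})$. From the remark following \autoref{def::omegas}, $\omega_j+n\omega_p\cong\omega_{j+np}$ as real $\G$-representations, so $D_j C^n$ lies in the $\omega_{j+np}$-summand of \autoref{thm::addcpv}. As $(j,n)$ ranges over $\{0,\dots,p-1\}\times\bZ_{\ge 0}$, the index $j+np$ hits each nonnegative integer exactly once, so it suffices to check that each $D_j C^n$ generates its free rank-one summand. This can be verified after restriction to $\G/e$, where $\ulh^*(\cpv{\cxuniverse})(\G/e)\cong H^*(\bC P^\infty;R)$ is polynomial on the image of $D_1$: the products $D_j C^n$ restrict, up to units in $R$, to distinct powers of that polynomial generator. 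As a byproduct $C$ generates a polynomial subalgebra of $\h^*(\cpv{\cxuniverse})$, since no power of $C$ is zero.

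With this basis established, every product expands uniquely as
\[ D_j D_k \;=\; \sum_{m\ge 0} a_m\, e_m, \qquad a_m\in \ulh^{\omega_j+\omega_k-\omega_m}, \]
where $e_m$ denotes the basis element in dimension $\omega_m$. Commutativity $D_j D_k = D_k D_j$ follows from graded commutativity of the Bredon cup product together with the observation that each $\omega_i$ has even underlying real dimension, so the relevant Koszul sign is trivial.

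The crux is to show $a_m = 0$ unless $j\le m\le j+k$. Unwinding \autoref{def::omegas}, the virtual representation $\omega_j+\omega_k-\omega_m$ is the underlying real of an explicit signed sum of one-dimensional complex characters $\phi_{-i}$. One then invokes the computation in \cite{lgl} of the $RO(\G)$-graded cohomology of a point $\ulh^*$ with Burnside ring coefficients --- whose nonzero groups are concentrated in a pair of ``cones'' determined by the signs of the dimensions of the fixed-point sets of the underlying representation --- to conclude that $\ulh^{\omega_j+\omega_k-\omega_m}=0$ whenever $m<j$ or $m>j+k$. This vanishing analysis, essentially a careful bookkeeping check comparing the multiplicities of the various $\phi_{-i}$ against the shapes of the nonvanishing cones, is the main obstacle of the proof.

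It remains to translate the surviving range $\{j,\dots,j+k\}$ back to generators via the bijection $m = l + np$ with $0\le l\le p-1$. If $j+k\le p-1$, every surviving $m$ lies in $[0,p-1]$ and has $n=0$, producing a combination of $D_j,\dots,D_{j+k}$. If $j+k\ge p$, the values $m\le p-1$ give $D_j,\dots,D_{p-1}$ (with $n=0$) while $m = p,\dots,j+k$ give $C=D_0 C,\,D_1 C,\dots,D_{j+k-p}C$ (with $n=1$), which is exactly the second case of the theorem.
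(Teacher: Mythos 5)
Note first that the paper does not prove this statement at all: it is quoted from Lewis \cite{lgl}, with the explicit remark that the relations take several pages to write out. So your plan has to stand on its own, and its central step does not. The crux of your argument is the claim that $\h^{\omega_j+\omega_k-\omega_m}(S^0)=0$ whenever $m<j$ or $m>j+k$, so that degree bookkeeping in the cohomology of a point forces the stated support of $D_jD_k$. That vanishing is false. Already for $m=0$ the relevant coefficient group is $\h^{\omega_j+\omega_k}(S^0)$ with $\omega_j+\omega_k$ an actual representation with trivial fixed part; from the standard cell structure on $S^{-(\omega_j+\omega_k)}$ this group is $\operatorname{coker}\bigl(\mathrm{tr}\colon R\to A(C_p)\otimes R\bigr)\cong R\neq 0$ --- it is exactly where the Euler class of $\omega_j+\omega_k$ lives. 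Similarly, for $j+k<m\le p-1$ the degree $\omega_j+\omega_k-\omega_m$ has negative total dimension and zero fixed dimension, and with Burnside (as opposed to constant) coefficients such groups are not zero in general: they contain kernel-of-restriction classes, e.g.\ $\h^{-\omega_1}(S^0)\cong\ker\bigl(\operatorname{res}\colon A(C_p)\otimes R\to R\bigr)\cong R$. So the range $j\le m\le j+k$ cannot be read off from vanishing of $\ulh^*$ alone; in particular, ruling out an Euler-class multiple of $D_0=1$ in $D_jD_k$ cannot even be done by the nonequivariant restriction you invoke elsewhere, since Euler classes restrict to zero at the $\G/e$ level. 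Pinning down these coefficients is precisely what Lewis's multi-page argument does, using restriction to the fixed-point components of $\cpv{\cxuniverse}$, naturality with respect to the finite subspaces $\cpv{V}$, and the $\ulh^*$-module structure, not just the shape of the cohomology of a point.

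There is also a smaller gap at the start: knowing that $D_jC^n$ sits in degree $\omega_{j+np}$ and restricts to a unit multiple of $x^{j+np}$ at the $\G/e$ level does not show that it generates the corresponding rank-one free summand of \autoref{thm::addcpv}. Its coordinate along that summand lies in $A(C_p)\otimes R$, and elements there can restrict to units in $R$ without being units (over $\fq$ with $q\neq p$ one has $A(C_p)\otimes\fq\cong\fq\times\fq$, so an idempotent with marks $(1,0)$ restricts to $1$); one needs the fixed-point mark as well, or Lewis's inductive freeness argument. The commutativity observation and the final reindexing $m=l+np$ translating the surviving range into $D_j,\dots,D_{p-1},C,\dots,D_{j+k-p}C$ are fine, but the heart of the theorem --- why only those basis elements occur, and indeed why the $D_jC^n$ form a basis at all --- is not supplied by the proposal.
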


The list of relations takes several pages to write out, and so will not be
reproduced here; see \cite{lgl} for details.
Some things to note are that $D_j \ne D_1^j$ (the dimensions are not
correct), and that $D_1^p$ is a nontrivial linear combination (over
$\ulh^*$) of the $D_j$ and $C$.

The elements $C$ and the $D_j$ are generators for
$\ulh^*{\cpv{\cxuniverse}}$ as a graded Green functor.  However, since
these generators all live at the $\G/\G$ level, there are no complications;
they give a set of algebra generators for the $RO(G)$-graded,
$\rmod$ valued theory $\h^*{\cpv{\cxuniverse}}$ as well.

One final note: Lewis uses $R=\bZ$ and the Burnside ring Mackey functor $A$
for all of his calculations.  However, if we instead used $R=\fq$ and
$A \cong A\otimes \fq$ 
(with the tensor product taken levelwise) for any prime $q\ne p$, all
arguments go through verbatim, and the algebra structure looks the same.
In the case $q = p$, however, the crucial \cite[Corollary 2.7]{lgl} fails,
and so the answer looks very different.

\section{Example: the cohomology of $B_\G O(2)$ }\label{sec::computations}

As before, let $\G$ be the cyclic group of order $p$.  In this
section, we will calculate the $RO(\G)$-graded Bredon cohomology
\[\h^*(B_\G O(2);A\otimes\fq)\] of the equivariant classifying space $B_\G
O(2)$ for odd primes $p\ne q$.  By way of motivation, we know from
\cite{lgl} what $\h^* B_\G S^1$ is, and historically $O(2)$ is often the
first test case to try after $S^1$.  Let $D_1$ through $D_p$ and $C$ be the
elements described in \autoref{thm::multcpv}.

\begin{thm}\label{thm::mythm}
Let $p,q$ be distinct odd primes.  Then, as an algebra over $\h^*$,
$\h^*(B_\G O(2);A\otimes\fq)$ is isomorphic to the subalgebra of
$\h^*(\cpv{\cxuniverse};A\otimes\fq)$ generated by the elements
$D_2,D_4,\ldots,D_{p-1},D_1 C,\ldots,D_{p-2}C,C^2$.
\end{thm}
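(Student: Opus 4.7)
The plan is to follow the strategy of the nonequivariant \autoref{ex::noneq}. Applying \autoref{prop::pullback} to the short exact sequence $1 \to SO(2) \to O(2) \to C_2 \to 1$ of structure groups produces a pullback square whose right edge is $E_\G C_2 \to B_\G C_2$; the left edge is therefore a $\G$-fibration $f\colon B_\G O(2) \to B_\G C_2$ with fiber $B_\G SO(2) \simeq \cpv{\cxuniverse}$. Because $\gcd(p,2)=1$, every subgroup $\Lambda \le \G\times C_2$ meeting $C_2$ trivially is a product $H\times\triv$; unpacking the bar-construction definition of $E_\G C_2$ shows that $B_\G C_2$ is $\G$-homotopy equivalent to the trivial $\G$-space $BC_2 \simeq \bR P^\infty$. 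Accordingly, the fundamental EI-category $\Pi = \fund{B_\G C_2}$ has a skeleton with two objects $0 = (\G,\bullet)$ and $1 = (\triv,\bullet)$ satisfying $\End(0) = C_2$, $\End(1) = \G\times C_2$, with $\Hom(1,0)$ a $C_2$-torsor and $\Hom(0,1) = \emptyset$; moreover the universal cover $\widetilde{B_\G C_2}(H,x)$ is contractible for every $(H,x)$.

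I then apply Kronholm's equivariant Serre spectral sequence (\autoref{thm::bill}) to $f$: for each real $\G$-representation $V$,
\[ E_2^{s,t}(A\otimes\fq, V) = \h^s(B_\G C_2;\sh^{V+t}(f;A\otimes\fq)) \Longrightarrow \h^{V+s+t}(B_\G O(2);A\otimes\fq). \]
Since $\widetilde{B_\G C_2}$ is levelwise contractible, $\coh_*(\widetilde{B_\G C_2}) \cong \underline{\fq}$ concentrated in degree zero, so the equivariant Eilenberg spectral sequence of \autoref{thm::eqCE} collapses to $\h^s(B_\G C_2;\sN) \cong \Ext^s_\Pi(\underline{\fq},\sN)$. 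The orders $|\End(0)|=2$ and $|\End(1)| = 2p$ are invertible in $\fq$, so averaging at each object exhibits $\underline{\fq}$ as a direct summand of the representable projective $\contraproj{(\G,\bullet)}$ in $[\Pi\op,\fq\textrm{-mod}]$; the naturality of the splitting uses that the $\G$-factor of $\End(1)$ acts trivially on $\Hom(1,0)$. Thus $\underline{\fq}$ is projective, $\h^s(B_\G C_2;\sN)$ vanishes for $s>0$, and by \autoref{prop::eqhzero} and \autoref{prop::gconnected} the surviving $s=0$ term is the $C_2$-fixed submodule of $\sN(\G,\bullet)$.

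Combining the above, Kronholm's spectral sequence collapses at $E_2$ to the $s=0$ column, and its multiplicative pairing yields an isomorphism of $RO(\G)$-graded algebras
\[ \h^*(B_\G O(2);A\otimes\fq) \cong \h^*(\cpv{\cxuniverse};A\otimes\fq)^{C_2}, \]
where $C_2 = \pi_1((B_\G C_2)^\G)$ acts via the monodromy of $f$. This monodromy is induced by conjugation by a reflection in $O(2)$, which is complex conjugation on $SO(2) = U(1)$ and therefore permutes the irreducible $\G$-representations by $\phi_j \leftrightarrow \phi_{p-j}$. Tracking this through Lewis's construction---where $D_j$ has internal degree $\omega_j$, the underlying real representation of a $j$-complex-dimensional summand, and $C$ has degree $\omega_p$---shows that $C_2$ multiplies $D_j$ by $(-1)^j$ and $C$ by $-1$. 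In the additive basis $\{D_j C^n\}_{0 \le j \le p-1,\, n \ge 0}$ of \autoref{thm::multcpv}, the $C_2$-invariants are precisely those with $j+n$ even, and these are generated as an algebra by $D_2, D_4, \ldots, D_{p-1}, D_1 C, D_3 C, \ldots, D_{p-2} C, C^2$, as claimed. I expect the main obstacle to be verifying the signs $D_j \mapsto (-1)^j D_j$ and $C \mapsto -C$: unlike the one-line $x \mapsto -x$ computation in the nonequivariant case, this requires unpacking Lewis's explicit geometric construction of the classes $D_j$ and $C$ in $\cpv{\cxuniverse}$.
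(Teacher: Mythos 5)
Your proposal follows the paper's proof in all structural respects: the same pullback square from \autoref{prop::pullback} giving the fibration $f\colon B_\G O(2)\to B_\G\cyctwo$ with fiber $\cpv{\cxuniverse}$, the same identification of $B_\G\cyctwo$ as $\bR P^\infty$ with trivial $\G$-action (\autoref{lem::rpinfty}), the same two-object skeleton of $\fund{B_\G\cyctwo}$, the same splitting of $\underline{\fq}$ off the representable functor at $(\G,\bullet)$ (using $q\ne 2$ and the triviality of the $\G$-action on the upward maps) to collapse first the equivariant Eilenberg spectral sequence of \autoref{thm::eqCE} and then the Serre spectral sequence, and the same identification of the abutment as the $\cyctwo$-fixed points of $\h^*(\cpv{\cxuniverse};A\otimes\fq)$ via \autoref{prop::eqhzero} and \autoref{prop::gconnected}. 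A minor remark: only invertibility of $2$ is used for the projectivity splitting; the hypothesis $q\ne p$ enters through Lewis's computation of $\ulh^*(\cpv{\cxuniverse})$, not there.

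The one point where you diverge --- and which you flag as the main obstacle --- is the determination of the signs of the $\cyctwo$-action on the generators. You propose to identify the monodromy geometrically as complex conjugation (permuting $\phi_j\leftrightarrow\phi_{p-j}$) and to extract the signs by unpacking Lewis's construction of $D_j$ and $C$; this is left unverified in your write-up and is more work than necessary. The paper's \autoref{prop::h} settles it without touching Lewis's geometry: the underlying nonequivariant fibration is $BSO(2)\to BO(2)\to B\cyctwo$, so at the $\G/\triv$ level of the Mackey functor the monodromy is the classical $x\mapsto -x$ on $H^*(\bC P^\infty;\fq)$; compatibility of the action with restriction, together with the facts that $D_j$ restricts to $x^j$ and $C$ restricts to $x^p$ with $p$ odd, then pins down the action at the $\G/\G$ level as $+1$ on the $D_{2k}$ and $-1$ on the $D_{2k-1}$ and $C$. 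With that substitution in place of your conjugation argument, your proof matches the paper's, and the signs you asserted (hence the stated list of algebra generators $D_2,\ldots,D_{p-1},D_1C,\ldots,D_{p-2}C,C^2$) are correct.
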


By analogy with \autoref{ex::noneq}, we will approach \autoref{thm::mythm} via
the short exact sequence of structure groups
\[ 1\to SO(2) \to O(2) \to \cyctwo \to 1 \]
and the induced fibration $f\colon B_\G O(2) \to B_\G \cyctwo$, where again
$\cyctwo$ is the cyclic group of order 2.  We will first identify the
$\G$-action on the fibers of $f$, then explicitly describe the coefficient
systems $h^{V+t}(f,A)$ and $\coh_*(\widetilde{B_\G\cyctwo})$, and finally
prove the theorem.

\subsection{Identifying the fibers of $f\colon B_\G O(2)\to B_\G\cyctwo$}

We will begin by identifying models for the equivariant classifying spaces
under consideration.  Recall that nonequivariantly, the universal bundle
$E\cyctwo \to B\cyctwo$ has as a model $S^\infty \to \bR P^\infty$, where
$S^\infty = S(\bR^\infty)$ is the unit sphere in $\bR^\infty$ and $\bR
P^\infty = \bR P(\bR^\infty)$ is the infinite-dimensional real projective
space.   
In general, for any group $G$, let $\runiverse$ be a direct sum containing
countably infinitely many copies of each real representation of $G$.  Then
$S(\runiverse)\to \bR P(\runiverse)$ is a model for $E_G\cyctwo\to
B_G\cyctwo$.  The $G\times\cyctwo$ action on $S(\runiverse)$ comes from the
$G$ action on $\runiverse$ and the $\cyctwo$ action by multiplication by
$-1$.  When $G = \G$ is cyclic of prime order, however, we can choose a
simpler model. 

\begin{lem}\label{lem::rpinfty}
If $p$ is an odd prime, then $E_\G\cyctwo \to B_\G\cyctwo$ has as a model
$S^\infty\to\bR P^\infty$ with the trivial $\G$-action on both spaces.
\end{lem}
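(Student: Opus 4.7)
The plan is to verify directly that $S^\infty$ with trivial $\G$-action satisfies the defining fixed-point conditions for $E_\G\cyctwo$ stated in \autoref{sec::eqclassifying}, namely that $(E_\G\cyctwo)^\Lambda$ is empty for $\Lambda\cap(\triv\times\cyctwo)\neq\triv$ and contractible otherwise. Once this is in hand, passing to $\cyctwo$-orbits recovers the claim about $B_\G\cyctwo$.

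First I would enumerate the subgroups of $\G\times\cyctwo$. Because $p$ is odd, $|\G|=p$ and $|\cyctwo|=2$ are coprime, so by Goursat's lemma (or an elementary direct check) every subgroup of $\G\times\cyctwo$ has the form $H\times K$ with $H\le\G$ and $K\le\cyctwo$. The four such subgroups are $\triv\times\triv$, $\G\times\triv$, $\triv\times\cyctwo$, and $\G\times\cyctwo$; the first two intersect $\triv\times\cyctwo$ trivially, the last two do not.

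Next I would compute fixed points. Give $S^\infty$ the trivial $\G$-action and the standard free $\cyctwo$-action by the antipodal map. Then $(S^\infty)^{H\times\triv}=S^\infty$ for either choice of $H\le\G$, which is contractible, while $(S^\infty)^{H\times\cyctwo}=\emptyset$ for either choice of $H$ because the antipodal action is free. These are exactly the required properties, so $S^\infty$ with trivial $\G$-action is a model for $E_\G\cyctwo$. Taking the quotient by $\triv\times\cyctwo$ then yields $B_\G\cyctwo\simeq S^\infty/\cyctwo=\bR P^\infty$, again with trivial $\G$-action, and the quotient map is the one induced from $S^\infty\to\bR P^\infty$.

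There is no real obstacle here; the only subtle point is knowing that all subgroups of $\G\times\cyctwo$ are products, which is precisely where the hypothesis that $p$ is odd (equivalently, $\gcd(p,2)=1$) enters. For $p=2$ one would instead need to deal with the diagonal subgroup, which is why the lemma is stated only for odd primes.
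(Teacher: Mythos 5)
Your proof is correct and takes essentially the same route as the paper: both verify the fixed-point conditions directly, and both use the oddness of $p$ to show that every relevant subgroup of $\G\times\cyctwo$ has the form $H\times\triv$. The paper phrases this via twisted diagonal subgroups $\Delta_{\rho,H}$ and the nonexistence of nontrivial homomorphisms $H\to\cyctwo$, while you invoke Goursat's lemma / coprimality to show all subgroups are products; these are two packagings of the same elementary observation.
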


\begin{proof}
To verify this claim, it suffices to check the fixed-point sets of
$S^\infty$.  Since $\cyctwo$ acts freely on $S^\infty$, the fixed
points $(S^\infty)^\Lambda$ are certainly empty when $\Lambda\cap\cyctwo$
is nontrivial.  So we need only check that the fixed point set is
contractible whenever $\Lambda\cap\cyctwo$ is trivial.

Note that the subgroups $\Lambda\subset \G\times \cyctwo$ which intersect
$\cyctwo$ trivially are the ``twisted diagonal subgroups''
$\Lambda = \Delta_{\rho,H} = \{ (h,\rho(h)) | h\in H \}$, for $H$ a
subgroup of $\G$ and $\rho\colon H\to\cyctwo$ a homomorphism.  However,
since $p$ is an odd prime, the only homomorphism $H\to\cyctwo$ is the
trivial homomorphism, and so $\Delta_{\rho,H} = H\times\triv$.  This acts
trivially on $S^\infty$, so $(S^\infty)^\Lambda = S^\infty \simeq \bullet$,
as desired.  
\end{proof}

Similarly, $SO(2)$ is the circle $\bT$.  Letting $\cxuniverse$ again be the
direct sum of countably infinitely many copies of each irreducible complex
representation of $\G$, an analysis of the fixed-point sets of
$S(\cxuniverse)$ gives the following well-known result.

\begin{lem}\label{lem::cpinfty}
For any prime $p$, $E_\G SO(2) \to B_\G SO(2)$ has as a model
$S(\cxuniverse) \to \cpv{\cxuniverse}$.  The $\G\times SO(2)$ action on
$S(\cxuniverse)$ comes from the $\G$ action on $\cxuniverse$ and the usual
circle action on the complex plane. \hfill $\Box$
\end{lem}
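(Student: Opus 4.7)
The plan is to follow the same template as the proof of \autoref{lem::rpinfty}: verify that $S(\cxuniverse)$ has the fixed-point behavior required of $E_\G SO(2)$, namely that $(S(\cxuniverse))^\Lambda$ is empty whenever $\Lambda \cap SO(2) \ne \triv$ and contractible whenever $\Lambda \cap SO(2) = \triv$. Given this, passing to $SO(2)$-orbits automatically produces $\cpv{\cxuniverse} = S(\cxuniverse)/SO(2)$ as a model for $B_\G SO(2)$, with the $\G \times SO(2)$-action as stated. The first condition is immediate: a nontrivial $s \in SO(2)$ acts on each irreducible summand $\phi_k \subset \cxuniverse$ by scalar multiplication by a unit complex number $\ne 1$, so no nonzero vector is fixed; hence if $(e,s) \in \Lambda$ with $s \ne 1$, then $(S(\cxuniverse))^\Lambda = \emptyset$.

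For the second condition, I would first observe that any subgroup $\Lambda \le \G \times SO(2)$ with $\Lambda \cap SO(2) = \triv$ projects injectively to $\G$ and hence is the graph $\Delta_{\rho, H} = \{(h, \rho(h)) \mid h \in H\}$ of a homomorphism $\rho \colon H \to SO(2)$ for some $H \le \G$. Since $\G$ is cyclic of prime order, there are only two cases. For $H = \triv$, $\Lambda$ itself is trivial and $(S(\cxuniverse))^\Lambda = S(\cxuniverse)$, which is contractible because $\cxuniverse$ is infinite-dimensional. For $H = \G$, the homomorphism $\rho$ sends a chosen generator to a $p$-th root of unity $e^{2\pi i j/p}$ for some $0 \le j < p$; a direct computation using the combined $\G$- and $SO(2)$-actions on each $\phi_k$ shows that $(\cxuniverse)^\Lambda$ is the sum of those copies of $\phi_k$ with $k + j \equiv 0 \pmod p$.

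The crux of the argument is that $\cxuniverse$ is a complete universe, containing infinitely many copies of every irreducible; hence for each $j$ the fixed subspace $(\cxuniverse)^\Lambda$ is still infinite-dimensional, and its unit sphere is contractible. I do not expect any step to present a serious obstacle; the one point to handle with care is the explicit computation of the fixed subspaces of the twisted diagonals and the observation that completeness of the universe forces these to be infinite-dimensional, so that contractibility follows.
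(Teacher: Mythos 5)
Your proposal is correct and is exactly the argument the paper has in mind: the lemma is stated with only the remark that ``an analysis of the fixed-point sets of $S(\cxuniverse)$'' gives it, and your fixed-point analysis (empty fixed sets when $\Lambda$ meets $SO(2)$, twisted diagonals $\Delta_{\rho,H}$ otherwise, with $(\cxuniverse)^\Lambda$ an infinite sum of copies of a single $\phi_k$ by completeness of the universe) carries out that analysis along the same template as \autoref{lem::rpinfty}. No gaps; the only implicit step worth a word is that $(S(\cxuniverse))^\Lambda = S\bigl((\cxuniverse)^\Lambda\bigr)$ since the action is unitary, after which contractibility of the infinite-dimensional sphere finishes the argument.
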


We are now in a position to use \autoref{prop::pullback} for the short exact
sequence 
\[ 1\to SO(2) \to O(2) \xrightarrow{\text{det}} \cyctwo \to 1. \]
We have a pullback square in the category of $\G$-spaces
\[ \xymatrix@C=3pc{
  B_\G SO(2) \ar[r] \ar[d] \pullbackcorner & E_\G \cyctwo \ar[d] \\
  B_\G O(2) \ar[r] & B_\G \cyctwo \\
}\]
By \autoref{lem::rpinfty}, the $\G$-actions on $E_\G\cyctwo$ and $B_\G
\cyctwo$ are trivial.  It follows that $E_\G\cyctwo$ is $\G$-contractible,
and so we have proved the following about our map $f\colon B_\G O(2)\to
B_\G\cyctwo$.

\begin{lem}\label{lem::fibers}
For each point $x\in B_\G\cyctwo$, the fiber $f^{-1}(x)$ is $\G$-homotopy
equivalent to $B_\G SO(2) = \cpv{\cxuniverse}$.\hfill $\Box$
\end{lem}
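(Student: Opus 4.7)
The plan is to read the conclusion directly off the pullback square displayed just before the lemma, using the $\G$-contractibility of $E_\G \cyctwo$ established in \autoref{lem::rpinfty}. The whole proof should be short; the pullback square and the earlier identifications have done most of the work.

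First, I would label the displayed pullback square as
\[ \xymatrix@C=3pc{ B_\G SO(2) \ar[r]^{p_1} \ar[d]_{p_2} & E_\G \cyctwo \ar[d]^{\pi} \\ B_\G O(2) \ar[r]^{f} & B_\G \cyctwo } \]
and observe that, because taking $H$-fixed points preserves pullbacks of spaces, the square of fixed-point spaces at every subgroup $H<\G$ is again a pullback. Since $f$ is a $\G$-fibration by hypothesis, each $f^H$ is a fibration and hence so is each $p_1^H$, so $p_1$ is itself a $\G$-fibration. Moreover, the pullback property identifies $p_1^{-1}(\tilde{x})$ with $f^{-1}(\pi(\tilde{x}))$ for every $\tilde{x} \in E_\G \cyctwo$.

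Second, I would invoke \autoref{lem::rpinfty} to note that $E_\G \cyctwo$ has trivial $\G$-action and underlying space $S^\infty$. Consequently $(E_\G \cyctwo)^H = E_\G \cyctwo \simeq \bullet$ for every subgroup $H \subset \G$, so $E_\G \cyctwo$ is $\G$-contractible.

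Finally, I would appeal to the standard principle that a $\G$-fibration with $\G$-contractible base is $\G$-homotopy equivalent to any of its fibers via the fiber inclusion. On each fixed-point set this is the classical unequivariant fact: the fiber inclusion $p_1^{-1}(\tilde{x})^H \hookrightarrow (B_\G SO(2))^H$ is a fibration between spaces over a contractible base and hence a homotopy equivalence. Since this map is a weak equivalence on every fixed-point set, an equivariant Whitehead argument (or, more concretely, lifting a $\G$-deformation of $E_\G \cyctwo$ to a point through the $\G$-HLP of $p_1$) gives that the fiber inclusion is a $\G$-homotopy equivalence. Combining this with the identification $p_1^{-1}(\tilde{x}) = f^{-1}(x)$ and $B_\G SO(2) = \cpv{\cxuniverse}$ (\autoref{lem::cpinfty}) yields $f^{-1}(x) \simeq_\G \cpv{\cxuniverse}$, as claimed. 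The only mild obstacle is the last equivariant Whitehead step; if desired this can be avoided entirely by constructing the $\G$-homotopy inverse by hand from a $\G$-contraction of $E_\G \cyctwo$ lifted through the $\G$-HLP.
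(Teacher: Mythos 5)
Your argument is essentially the paper's own: the paper deduces the lemma immediately from the pullback square of \autoref{prop::pullback} together with the $\G$-contractibility of $E_\G\cyctwo$ supplied by \autoref{lem::rpinfty}, which is exactly the route you take. You have merely spelled out the routine details (fixed points preserve pullbacks, the pulled-back map is a $\G$-fibration, fiber inclusion over a $\G$-contractible base is a $\G$-homotopy equivalence) that the paper leaves implicit, so the proposal is correct.
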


\subsection{The local coefficient system $\sh^{V+t}(f,A)$}
Recall from \autoref{thm::bill} that the equivariant Serre spectral sequence
for a $\G$-fibration $f\colon E\to X$ has
\[ E_2^{s,t}(M,V) = \h^s(X;\sh^{V+t}(f,M)) \Longrightarrow
\h^{V+s+t}(E;M). \]
Choose the coefficient ring $R = \fq$, the finite field with $q$ elements,
for an odd prime $q\ne p$.  As in \autoref{thm::mythm}, let $M = A$; note
that $A$ here is a represented functor to $\fq$-mod, but we choose not to
explicitly make the $q$ part of the notation.  We must first analyze the
local coefficient systems
\[ \sh^{V+t}(f,A)\colon (H,x)\longmapsto \h^{V+t}(\G\times_H f^{-1}(x)). \]
We may start by taking a skeleton $\Pi$ of the category $\fund{B_\G\cyctwo}$.  
Again using \autoref{lem::rpinfty}, we see that each fixed-point set
$(B_\G\cyctwo)^H$ is nonempty and connected with fundamental group
$\cyctwo$, so $\Pi$ has two objects $(\G,x_0)$ and $(\triv,x_0)$.
Recalling that a map $(H,x)\to (K,y)$ consists of a $\G$-map $\alpha\colon
\G/H\to \G/K$ and a homotopy class of paths $[\gamma]$ from $x$ to
$\alpha^* y$, we see that there are two endomorphisms of $(\G,x_0)$. One of
these is the identity, and the other, $\kappa$, squares to the identity.
Similarly, there are two morphisms $(\triv,x_0)\to (\G,x_0)$, and
composition with $\kappa$ exchanges them.  Finally, the endomorphisms of
$(\triv,x_0)$ are in bijection with $\G\times\cyctwo$; when precomposing
with the two morphisms $(\triv,x_0)\to (\G,x_0)$, only the $\cyctwo$ factor
has an effect.  We may visualize $\Pi$ as follows:
\[ \xymatrix@1{
(\G,x_0) \ar@(u,ur)[]^\cyctwo \\
(\triv,x_0) \ar@(d,dr)[]_{\cyctwo} \ar@(d,dl)[]^{\G} \ar@/^2ex/[u] \ar@/_2ex/[u] \\
}\]

We can then explicitly describe the coefficient system $\sh^{V+t}(f,A)$.
Let $\proj_{\G}$ be the projection $\G/\triv\to \G/\G$.

\begin{prop}\label{prop::h}
The functor $\sh^{V+t}(f,A)\colon \Pi\to\rmod$ takes
\begin{align*}
(\G,x_0) & \longmapsto \h^{V+t}(\cpv{\cxuniverse}; A) =
\ulh^{V+t}(\cpv{\cxuniverse})(\G/\triv)\\
(\triv,x_0) & \longmapsto H^{|V|+t}(\cpv{\cxuniverse};A(\G/e)) =
\ulh^{V+t}(\cpv{\cxuniverse})(\G/\G)
\end{align*}
The functor is determined on morphisms by the following: 
\begin{enumerate}
\item \label{item::down} the image of $(\proj_{\G},[c_{(\triv,x_0)}])$ is the image
of $\proj_{\G}$ in the underlying contravariant coefficient system of
the Mackey functor $\ulh^{V+t}(\cpv{\cxuniverse})$;

\item \label{item::endobottom} for each map $g\colon \G/\triv \to
\G/\triv$, the image of $(g,[c_{(\triv,x_0)}])$ is the image of $g$ in the
underlying contravariant coefficient system of
$\ulh^{V+t}(\cpv{\cxuniverse})$;

\item \label{item::endotop} Let $D_1$ through $D_{p-1}$ and $C$ be the
algebra generators described in \autoref{thm::multcpv}.  The nontrivial
automorphism of $(\G,x_0)$ acts by
the identity on the $D_{2k}$ and by multiplication by $-1$ on the
$D_{2k-1}$ and $C$.
\end{enumerate}
\end{prop}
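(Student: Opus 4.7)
The plan is to unpack \autoref{def::eqloc} using the fiber identification of \autoref{lem::fibers} and then analyze each type of morphism in $\Pi$ separately. Since $f^{-1}(x)\simeq_\G\cpv{\cxuniverse}=B_\G SO(2)$ as a $\G$-space, it is in particular $H$-homotopy equivalent, so substituting into $\sh^{V+t}(f,A)(H,x)=H^{V+t}_H(f^{-1}(x);A|_H)$ and invoking the orbit-level Mackey functor definition gives $\sh^{V+t}(f,A)(H,x)\cong\ulh^{V+t}(\cpv{\cxuniverse})(\G/H)$, yielding both object-level assignments at $(\G,x_0)$ and $(\triv,x_0)$.

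For items (\ref{item::down}) and (\ref{item::endobottom}), the morphisms $(\alpha,[c_{(\triv,x_0)}])$ have trivial path class, so the path-lifting contribution to the functoriality is trivial and the induced map depends only on the $\G$-map factor. Under the fiber identification, $\proj_\G$ induces restriction from $\G$-equivariant to nonequivariant cohomology and $g\in\G$ induces the translation action on nonequivariant cohomology; these are by construction the maps encoding the underlying contravariant coefficient system of $\ulh^{V+t}(\cpv{\cxuniverse})$, so both items follow immediately.

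For item (\ref{item::endotop}), write $\kappa=(\id,[\gamma_0])$ where $\gamma_0$ generates $\pi_1((B_\G\cyctwo)^\G)=\cyctwo$. Applying \autoref{prop::pullback} to the exact sequence $1\to SO(2)\to O(2)\xrightarrow{\det}\cyctwo\to 1$ presents $f$ as the pullback of $E_\G\cyctwo\to B_\G\cyctwo$, so the holonomy action of $\gamma_0$ on the fiber $B_\G SO(2)$ is given by the $\cyctwo$-conjugation action on the structure group $SO(2)=\bT$, which is complex conjugation. On $\cpv{\cxuniverse}$ this descends to the involution $\tau$ induced by complex conjugation of the universe $\cxuniverse$ (which sends each irreducible $\phi_j$ to $\phi_{-j}$), so $\kappa$ acts on $\sh^{V+t}(f,A)(\G,x_0)$ as $\tau^*$.

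The main obstacle is computing $\tau^*$ on the specific generators $D_j$ and $C$, which I would handle by pulling back to the trivial subgroup. By the additive decomposition in \autoref{thm::addcpv} the restriction map $\h^{\omega_j}(\cpv{\cxuniverse})\to H^{2j}(\cpv{\cxuniverse};\fq)$ carries $D_j$ to a nonzero $\fq$-scalar multiple of $x^j$, where $x\in H^2(\bC P^\infty;\fq)$ is the classical generator; similarly $C$ restricts to a nonzero multiple of $x^p$. Nonequivariantly, $\tau$ is ordinary complex conjugation on $\bC P^\infty$, so $\tau^*x=-x$ and hence $\tau^*x^j=(-1)^jx^j$. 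The composition identity $\kappa\circ(\proj_\G,[c_{(\triv,x_0)}])=(\proj_\G,[\gamma_0])$ in $\Pi$ together with contravariant functoriality of $\sh^{V+t}(f,A)$ produces a commuting square forcing the $\fq$-scalar by which $\tau^*$ acts on $D_j$ to agree with the nonequivariant scalar by which it acts on $x^j$; this yields $\tau^*D_j=(-1)^jD_j$ and $\tau^*C=(-1)^pC=-C$. Since $p$ is odd this is exactly the stated formulas $\tau^*D_{2k}=D_{2k}$, $\tau^*D_{2k-1}=-D_{2k-1}$, and $\tau^*C=-C$.
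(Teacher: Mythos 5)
Your proof is correct and takes essentially the same route as the paper: identify the object-level values via the fiber identification of \autoref{lem::fibers}, observe that morphisms with constant path class induce exactly the maps of the underlying contravariant coefficient system of $\ulh^{V+t}(\cpv{\cxuniverse})$, and pin down the $\cyctwo$ action on the generators $D_j$, $C$ by passing to the underlying nonequivariant level where the monodromy acts by $x\mapsto -x$ and using compatibility with restriction. The one place you differ is in \emph{deriving} the nonequivariant sign action from the $\cyctwo$-conjugation on $SO(2)$ (which is inversion, hence complex conjugation on $\cpv{\cxuniverse}$) via the pullback presentation of \autoref{prop::pullback}, whereas the paper simply cites the classical fact that $\pi_1 B\cyctwo$ acts by $-1$ on $H^2(BSO(2);\fq)$ in the nonequivariant fibration $BSO(2)\to BO(2)\to B\cyctwo$; this is a self-contained elaboration of the same step rather than a different proof strategy.
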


This may be visualized by the diagram below.
\[ \xymatrix@1{
\h^{V+t}(\cpv{\cxuniverse}; A) \ar@/^2ex/[d] \ar@/_2ex/[d] \ar@(u,ur)[]^{\cyctwo} \\
H^{|V|+t}(\cpv{\cxuniverse};A(\G/e)) \ar@(d,dl)[]^{\G} \ar@(d,dr)[]_{\cyctwo} \\
} \]
Note that the second downward arrow is given by composing the maps of
(\ref{item::down}) and (\ref{item::endotop}).

\begin{proof}
\autoref{lem::fibers} identifies the value of $\sh^{V+t}(f;A)$ on objects.

For item (\ref{item::down}), 
the downward arrow induced by the morphism
$(\proj_{\G},[c_{(\triv,x_0)}])$ is simply the map on
cohomology induced by the space-level map
\[\G\times f^{-1}(x) \to f^{-1}(x).\]
This is the same as the map $\ulh^{V+t}(\cpv{\cxuniverse})(\G/\G) \to
\ulh^{V+t}(\cpv{\cxuniverse})(\G/\triv)$ induced by the span
\[ \cospan{\G/\triv}{\G/\G}{} \]
A similar argument identifies the map in item (\ref{item::endobottom}).

It remains to identify the $\cyctwo$ action.  Recall that any $B_G\Pi$ is of the
nonequivariant homotopy type of the classifying space $B\Pi$.  In
particular, our fibration $B_\G O(2)\to B_\G \cyctwo$ corresponds to the
nonequivariant fiber sequence
\[ BSO(2)\to BO(2)\xrightarrow{B\text{det}} B\cyctwo \]
We know that $H^*(BSO(2);\fq) \cong \fq[x]$, a polynomial algebra on a
generator $x$ in degree $2$, and that $\pi_1 B\cyctwo$ acts by $-1$ on $x$.
This determines the $\cyctwo$ action at the $\G/\triv$ level in the Mackey
functor $\ulh^{V+t}$, and thus at the $\G/\triv$ level in $\sh^{V+t}(f;A)$.
The algebra generators of $\ulh^*(\cpv{\cxuniverse})$ at the $\G/\G$ level
are $D_1$ through $D_{p-1}$ and $C$, where $D_j$ restricts to $x^j$ at the
$\G/\triv$ level and $C$ restricts to $x^p$.  It follows that the action of
$\cyctwo$ at the $\G/\G$ level must be by $-1$ on the elements $D_{2k-1}$
and $C$, and by the identity on the $D_{2k}$.
\end{proof}

\subsection{The local coefficient system $\coh_*(\widetilde{B_\G\cyctwo})$}

Recall that $\coh_*(\tilde{X})$ is the coefficient system $\Pi X\op\to\rmod$
which takes $(H,x)\longmapsto H_*(\widetilde{X^H}(x))$.  In our case, since
$B_\G\cyctwo \cong \bR P^\infty$ with trivial $\G$-action, it follows that 
$\coh_*(\widetilde{B_\G\cyctwo})$ is the constant functor at $H_*(S^\infty)
\cong H_*(\bullet)$.  We will continue to take our coefficient ring
$R=\fq$ for $q\ne p$, so $H_*(\bullet) \cong \fq$ concentrated in dimension
0.

One reason that $R=\fq$ is such a convenient choice in this example is that 
the constant functor at $H_*(\bullet)$ is projective.

{\renewcommand{\ground}{\fq}
\begin{prop}
The constant functor $\underline{\ground}\colon \Pi\op \to\rmod$ is a direct
summand of a representable functor and hence a projective object in the
category $[\Pi\op,\rmod]$.
\end{prop}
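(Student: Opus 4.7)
The plan is to exhibit $\underline{\ground}$ as a direct summand of the representable functor $\contraproj{\G,x_0} = \ground\fundhom{-}{(\G,x_0)}$. Since representables are projective in $[\Pi\op,\ground\textrm{-mod}]$ by the Yoneda lemma and direct summands of projectives are projective, this gives the conclusion.

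To produce such a splitting, I use Yoneda to identify the endomorphism ring of $\contraproj{\G,x_0}$ with $\contraproj{\G,x_0}(\G,x_0) = \ground[\End_\Pi(\G,x_0)] = \ground[\cyctwo]$, where $\cyctwo$ is generated by the nontrivial path-class involution $\kappa$ described just before \autoref{prop::h}. Since $q$ is odd, $2$ is invertible in $\ground$, so $\ground[\cyctwo]$ is semisimple and contains the orthogonal idempotents $e_\pm = (1 \pm \kappa)/2$. Let $\sF$ denote the image of the idempotent natural endomorphism of $\contraproj{\G,x_0}$ corresponding to $e_+$; then $\contraproj{\G,x_0} \cong \sF \oplus \sF'$ as functors in $[\Pi\op,\ground\textrm{-mod}]$.

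It remains to identify $\sF$ with $\underline{\ground}$. Computing levelwise, $\sF(\G,x_0) = e_+ \cdot \ground[\cyctwo] \cong \ground$, while $\fundhom{(\triv,x_0)}{(\G,x_0)}$ consists of two elements (the two lifts $\phi$ and $\kappa\phi$ of the projection $\proj_\G$), so $\sF(\triv,x_0) = \ground\cdot \tfrac{1}{2}(\phi + \kappa\phi) \cong \ground$. The substance of the argument is to verify that every morphism of $\Pi$ acts by the identity on these one-dimensional summands. This amounts to checking: (i) right multiplication by $\kappa$ fixes $e_+$; (ii) each endomorphism of $(\triv,x_0)$ either fixes or swaps $\phi$ and $\kappa\phi$ (only the $\cyctwo$-factor of $\G\times\cyctwo$ has any effect, by the description in the paragraph preceding \autoref{prop::h}), hence preserves the sum $\phi+\kappa\phi$; (iii) both morphisms $(\triv,x_0)\to(\G,x_0)$ send $e_+$ to $\tfrac{1}{2}(\phi+\kappa\phi)$, since $e_+\cdot \phi = e_+\cdot\kappa\phi$.

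The main obstacle is not technical depth but bookkeeping: one must keep straight which morphism directions are pre- versus post-composition when applying Yoneda in $\Pi\op$, and confirm that the idempotent $e_+$, defined a priori only at $(\G,x_0)$, extracts the trivial summand uniformly at $(\triv,x_0)$ as well. Once these symmetries are tabulated, the isomorphism $\sF \cong \underline{\ground}$ is immediate.
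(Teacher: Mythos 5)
Your proof is correct and takes essentially the same approach as the paper: both split the representable $\contraproj{\G,x_0}$ along the $+1$-eigenspace of the involution $\kappa$, using that $2$ is invertible in $\ground=\fq$. The paper packages this via the change-of-basis matrix $\left(\begin{smallmatrix}1&1\\1&-1\end{smallmatrix}\right)$ while you package the same computation via the idempotent $e_+=(1+\kappa)/2$ in $\End(\contraproj{\G,x_0})\cong\ground[\cyctwo]$.
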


\begin{proof}
Consider the represented functor $\ground \Pi(-,(\trivo,x_0))$.  We see by
inspection that $\ground \Pi((K,x_0),(\trivo,x_0)) \cong \ground[\cyctwo]$ for both
possible values of $K$.  For an appropriate choice of basis, we can display this as
\[ \xymatrix@R=2pc@C=2pc{
\ground \oplus \ground
\ar@/_2ex/[d]_{\mymatrix{1&0\\0&1}} \ar@/^2ex/[d]^{\mymatrix{0&1\\1&0}}
\ar@(u,ur)[]^{\mymatrix{0&1\\1&0}} \\
\ground \oplus \ground
\ar@(d,dr)[]_{\mymatrix{0&1\\1&0}} \ar@(d,dl)[]^{\text{triv}} \\
}\]
That is, the nontrivial element of $\cyctwo$ acts by interchanging the
basis elements, on both the top and the bottom.  The action of $\G$ on the bottom
is trivial, and the downward maps behave as shown.  If we take the new
basis given by the change-of-coordinates matrix $\mymatrix{1&\phantom{-}1\\1&-1}$
(using the fact that $q\ne 2$), we see that $\ground \Pi(-,(\trivo,x_0))$
breaks up as the direct sum of two functors, one of which is our constant
functor $\underline{\ground}$.
\[ \xymatrix@R=0.5pc@C=1pc{
  & \ground \ar@(u,ur)[]^{\id} \ar@/_2ex/[dd]_{\id} \ar@/^2ex/[dd]^{\id} & & 
    \ground \ar@(u,ur)[]^{-1} \ar@/_2ex/[dd]_{\id} \ar@/^2ex/[dd]^{-1} \\
\ground \Pi(-,(\trivo,x_0))\hspace{10pt}\cong\hspace{10pt}  & & \hspace{10pt}\oplus\hspace{10pt} & \\
  & \ground \ar@(d,dr)[]_{\id} \ar@(d,dl)[]^{\text{triv}} & &
    \ground \ar@(d,dr)[]_{-1} \ar@(d,dl)[]^{\text{triv}} \\
}\]
\end{proof}
}



\subsection{The calculation of $\h^*(B_\G O(2);A\otimes\fq)$}

We are now prepared to prove \autoref{thm::mythm}.  Fix odd primes $p\ne q$.
We will continue to make heavy use of the identification of $B_\G \cyctwo$
in \autoref{lem::rpinfty}.

\begin{proof}[Proof of \autoref{thm::mythm}]
We will use the equivariant Eilenberg spectral sequence to identify the
$E_2$ page of the Serre spectral sequence for $f\colon B_\G O(2)\to
B_\G\cyctwo$ and then show that the Serre spectral sequence collapses with
no extension problems.

Since $\widetilde{B_\G\cyctwo}$ is the constant functor at $S^\infty$, the relevant
equivariant Eilenberg spectral sequence in this case is
\[ \Ext_\Pi^{u,v} \left( \coh_*(S^\infty),\sh^{V+t}(f;A) \right) \Longrightarrow
\h^{u+v}(B_\G\cyctwo;\sh^{V+t}(f;A)). \]
As before, in the $E_2$ term, $u$ is the homological degree and $v$ is the
internal grading on $\coh_*$.  Since $\coh_v(S^\infty)$ is either $0$ or
$\underline{\fq}$, both of which are projective,
$\Hom_\Pi(\coh_v(S^\infty),-)$ is exact, and so all $\Ext$ terms with $u>0$
vanish.  It follows that the spectral sequence collapses at $E_2$ with no
extension problems, and so the $E_2$ terms of the Serre spectral sequence
are given by
\[ \h^s(B_\G\cyctwo;\sh^{V+t}(f;A)) \cong
\Hom_\Pi \left( \coh_s(S^\infty),\sh^{V+t}(f;A) \right). \]
The homology of $S^\infty$ vanishes for $s>0$, so in fact the Serre
spectral sequence also collapses with no extension problems.

$B_\G\cyctwo$ has a trivial $\G$ action and is $\G$-connected, meaning that
\autoref{prop::eqhzero} and \autoref{prop::gconnected} apply.  Thus
we may identify
\[ \h^{V+t}(B_\G O(2);A) \cong \h^0(B_\G\cyctwo;\sh^{V+t}(f;A))
\hookrightarrow \h^{V+t}(B_\G SO(2);A)\]
as algebras over the cohomology of a point.

More specifically, we have
\[ \h^{V+t}(B_\G O(2);A) \cong \Hom_\Pi(\underline{\fq},\sh^{V+t}(f;A)). \] 
Our coefficient systems take values in $(\fq)$-mod, so for any $\sN$ and
any element ${\eta\nobreak\in\Hom_\Pi(\underline{\fq},\sN)}$, $\eta$ factors through
the ``fixed subfunctor of $\sN$,''  i.e.~the subfunctor
\[ \xymatrix@1{
\sN(\G,x_0)^{\cyctwo} \ar[d] \\
\sN(\triv,x_0)^{\G\times\cyctwo}.
}\]
Note the two downward arrows must give the same map, and so a single arrow has
been drawn above.  As already observed, our $\Pi$ has a weakly terminal
object, and so a map in $\Hom_\Pi(\underline{\fq},\sN)$ is in fact
determined by choosing an element of $\sN(\G,x_0)^{\cyctwo}$.  By
inspection of the structure of $\Pi = \fund{B_\G\cyctwo}$, we see that
every element of $\sN(\G,x_0)^{\cyctwo}$ defines a map in
$\Hom_\Pi(\underline{\fq},\sN)$, as well.  
In other words, we have demonstrated that 
\[ \h^{V+t}(B_\G O(2);A) \cong \h^{V+t}(B_\G SO(2);A)^{\cyctwo}\]
for each $V$ and $t$, and hence
\[ \h^{*}(B_\G O(2);A) \cong \h^{*}(B_\G SO(2);A)^{\cyctwo}.\]
By \autoref{thm::multcpv} and \autoref{prop::h}, it follows that
$\h^{*}(B_\G O(2);A)$ is the subalgebra of $\h^{*}(B_\G SO(2);A)$ generated
by the elements $D_{2k}$, $D_{2k-1} C$, and $C^2$, the generators
restricting to an even power of the nonequivariant generator $x$ of
$H^*(\mathbb{C}P^\infty)$.
\end{proof}

In fact, closer examination shows that the Green functor $\ulh^*(B_\G
O(2))$ is a sub-Green functor of $\ulh^*(\cpv{\sU}) = \ulh^*(B_\G SO(2))$,
again on the generators $D_{2k}$, $D_{2k-1} C$, and $C^2$.

\bibliographystyle{alpha}
\bibliography{localcoeff}

\end{document}